\documentclass[11pt, a4paper]{article}

\pdfoutput=1
\usepackage{graphicx}
\graphicspath{{./Figures/}}
\usepackage{subfigure}

\usepackage{amssymb,latexsym}

\usepackage[nocompress]{cite}

\usepackage{setspace}

\usepackage{amsmath}

\usepackage{mathrsfs,amsfonts}

\usepackage{amsthm,amsxtra}

\usepackage[final]{showkeys}

\usepackage{stmaryrd}

\usepackage{algorithm}
\usepackage{algorithmicx}
\usepackage{algpseudocode}
\usepackage{mathtools}

\usepackage{yhmath}

\usepackage[debug,pdftex,colorlinks=true, 
linkcolor=blue, bookmarksopen=false,
plainpages=false,pdfpagelabels]{hyperref}

\usepackage[openbib]{currvita}

\usepackage{fancyhdr}

\newtheorem{theorem}{Theorem}[section]
\newtheorem{lemma}[theorem]{Lemma}

\newtheorem{remark}[theorem]{Remark} 
\newtheorem{corollary}[theorem]{Corollary}

\newcommand{\eps}{\varepsilon}

\newcommand{\bbC}{\mathbb C} 
 \newcommand{\bbN}{\mathbb N}
\newcommand{\bbP}{\mathbb P} 
\newcommand{\bbR}{\mathbb R} 
\newcommand{\bbZ}{\mathbb Z} 

\newcommand{\bzero}{{\mathbf 0}}

\newcommand{\be}{\mathbf e}

\newcommand{\bu}{\mathbf u} \newcommand{\bv}{\mathbf v} 
  
 \newcommand{\bz}{\mathbf z} 
\newcommand{\bA}{\mathbf A} \newcommand{\bB}{\mathbf B}
  
\newcommand{\bE}{\mathbf E} \newcommand{\bF}{\mathbf F}
 \newcommand{\bH}{\mathbf H}
\newcommand{\bI}{\mathbf I} 
 
\newcommand{\bM}{\mathbf M} 
  
\newcommand{\bQ}{\mathbf Q} \newcommand{\bR}{\mathbf R}

\newcommand{\bW}{\mathbf W} \newcommand{\bX}{\mathbf X} 
 \newcommand{\bZ}{\mathbf Z}

 \newcommand{\cD}{\mathcal D} 
 
 \newcommand{\cH}{\mathcal H}

\newcommand{\cO}{\mathcal O}  
\newcommand{\cQ}{\mathcal Q} \newcommand{\cR}{\mathcal R}
\newcommand{\cS}{\mathcal S} 
 \newcommand{\cV}{\mathcal V}

\newcommand{\aver}[1]{\langle {#1} \rangle}

\newenvironment{keywords}
{\noindent{\bf Key words.}\small}{\par\vspace{1ex}}

\makeatletter
\newcommand{\chapterauthor}[1]{%
	{\parindent0pt\vspace*{-25pt}%
		\linespread{1.1}\large\scshape#1%
		\par\nobreak\vspace*{35pt}}
	\@afterheading%
}
\makeatother

\title{When the noncommutative AM-GM inequality holds}
\author{Yimin Zhong\thanks{Department of Mathematics and Statistics, Auburn University, Auburn, AL 36830;
yimin.zhong@auburn.edu}}

\date{}

\begin{document}
\maketitle

\begin{abstract}
In this note, we prove that the noncommutative AM-GM inequality holds if $n\ge 2\lceil m/2 \rceil^2$. The motivation comes from counterexamples constructed in~\cite{de2020random}. The proof constructs a vertex measure based on the Chebyshev nodes on the Boolean cube to extract the distinct indices. The main difficulty is that the measure is not positive on non-integer nodes. The key technique comes from~\cite{grigoriev2001complexity} and eventually transforms the problem into a quadrature estimate. 
\end{abstract}
\begin{keywords}
Noncommutative AM-GM inequality, Boolean cube, Chebyshev polynomials, Boolean harmonic decomposition, mock-Chebyshev nodes 
\end{keywords}

\section{Introduction}
In 2012, Recht and R\'e~\cite{recht2012toward} conjectured a noncommutative arithmetic-geometric mean (AM-GM) inequality: given $n$ positive semidefinite (PSD) matrices $\mathbf{A}_1,\dots, \mathbf{A}_n$, such that $\sum_{i=1}^n \mathbf{A}_i \preceq n \mathbf{I}$, then for any $1\le m\le n$, 
\begin{equation}\label{eq: rr}
  -\frac{n!}{(n-m)!} \mathbf{I} \preceq \sum_{\substack{1\le j_1,\dots, j_m \le n\\ j_1,\dots, j_m\;\text{distinct}}} \prod_{l=1}^m \mathbf{A}_{j_l} \preceq \frac{n!}{(n-m)!} \mathbf{I}.
\end{equation}
The inequality is closely related to stochastic gradient descent (SGD), suggesting that without-replacement sampling should help the training converge faster than with-replacement sampling, where $n$ is the number of samples and $m$ is the number of disjoint batches. The inequality was subsequently proved for $m\le 3$ with all $n\ge m$~\cite{recht2012toward,zhang2018note,lai2020recht}. In 2020, Lai and Lim~\cite{lai2020recht} disproved it through noncommutative Positivstellensatz theorem~\cite{helton2002positive} for $(m, n) =(5,5)$, and meanwhile De Sa~\cite{de2020random} provided a systematic way to construct counterexamples for both sides of~\eqref{eq: rr}. Soon after, Yun, Sra, and Jadbabaie~\cite{yun2021open} conjectured that~\eqref{eq: rr} should hold for $m=n$ on nearly identity matrices, implying that without-replacement sampling should still be superior with tiny learning rates for SGD, and~\cite{peng2026resolution} recently confirmed this.

Inspired by De Sa's counterexamples~\cite{de2020random}, this note studies whether the AM-GM inequality~\eqref{eq: rr} holds if $n$ is sufficiently large compared to $m^{2}$. This is exactly where those counterexamples begin to disappear.

\subsection{Motivation}\label{sec: mot}
Our motivation comes from a slight modification of De Sa's counterexamples~\cite{de2020random}. It reveals the mechanism by which the inequality is violated.

Let $n\ge 3$ and $d = n-1$. Denote $\be_1,\dots, \be_n$ the standard orthonormal basis in $\bbR^n$, and define vectors
\begin{equation}\label{eq: uz}
    \bu = \frac{1}{\sqrt{n}} \mathbf{1}, \qquad \bz_i = \sqrt{\frac{n}{d}} (\mathbf{I} - \bu\bu^{\top}) \be_i =  \sqrt{\frac{n}{d}} \left(\be_i - \frac{\bu}{\sqrt{n}}\right).
\end{equation}
Then $\|\bu\|_2 = \|\bz_i\|_2 = 1$, $\bu\perp \bz_i$, and 
\begin{equation}\label{eq: zz}
    \bz_i^{\top} \bz_j = -\frac{1}{d}, \qquad \sum_{i=1}^n \bz_i = 0.
\end{equation}
We construct the PSD matrices in $\bA_i\in\bbR^{n\times n}$ as follows.
\begin{equation}\label{eq: ai}
\begin{aligned}
    \bA_i &= \alpha \mathbf{I} + (1 -\alpha) \bu \bu^{\top} + \beta (\bu \bz_i^{\top} + \bz_i\bu^{\top}) + (\beta^2 - \alpha) \bz_i \bz_i^{\top} \\
    &= \alpha (\mathbf{I} - \bu\bu^{\top} - \bz_i\bz_i^{\top}) + \begin{pmatrix}
        \bu & \bz_i 
    \end{pmatrix} \begin{pmatrix}
        1\\ \beta 
    \end{pmatrix} \begin{pmatrix}
        1 & \beta 
    \end{pmatrix} \begin{pmatrix}
        \bu^{\top} \\ \bz_i^{\top} 
    \end{pmatrix},
\end{aligned}
\end{equation}
where $\alpha = \frac{d - \beta^2}{d-1}$ and $0\le \beta^2 \le d$. Then each $\bA_i$ is PSD by~\eqref{eq: ai}. Using~\eqref{eq: uz} and~\eqref{eq: zz}, we obtain
\begin{equation*}
\begin{aligned}
    \sum_{i=1}^n \bA_i &=  n \left( \alpha \mathbf{I} + (1-\alpha) \bu\bu^{\top}) \right) + (\beta^2 -\alpha) \sum_{i=1}^n \bz_i\bz_i^{\top} \\
    &=n \left( \alpha \mathbf{I} + (1-\alpha) \bu\bu^{\top}) \right)+ (\beta^2 -\alpha)  \frac{n}{d} (\mathbf{I} - \bu\bu^{\top})  \sum_{i=1}^n \be_i \be_i^{\top} (\mathbf{I} - \bu\bu^{\top})  \\
    &= n\mathbf{I}.
\end{aligned}
\end{equation*}
Let
\begin{equation}\label{eq: W}
    \bW_{m,n} := \frac{(n-m)!}{n!}\sum_{\substack{1\le j_1,\dots, j_m \le n\\ j_1,\dots, j_m\;\text{distinct}}} \prod_{l=1}^m \mathbf{A}_{j_l}.
\end{equation}
 Then $\bW_{m, n}$ commutes with all permutation matrices since $P_{\sigma}\bA_i P_{\sigma}^{\top} = \bA_{\sigma(i)}$ for any $\sigma\in{\rm Sym}(n)$ and the summation runs through all permutations. Thus, $\mathbf{1}$ is an eigenvector of $\bW_{m, n}$. 
 
 Let $\ell\in\{1,2\}$ and $m\ge 2\ell + 2$. We choose the constants in $\bW_{m,  n}$ as follows:
 \begin{equation}\label{eq: choice}
     \theta = \frac{\ell \pi}{m-1},\qquad \alpha = \frac{n}{n-1}\cos^2\theta, \qquad \beta = \sqrt{d - (d-1)\alpha}. 
 \end{equation}
Then we can show that the eigenvalue for $\mathbf{1}$ is
\begin{equation}\label{eq: eig}
    \lambda = (-1)^{\ell}\left(\frac{n}{n-1} \cos \theta \right)^{m-1},\qquad \ell\in\{1, 2\}.
\end{equation}
We postpone the proof of~\eqref{eq: eig} to Appendix~\ref{sec: p eig}, which follows a similar route as~\cite{de2020random}. 
To make~\eqref{eq: rr} hold on both sides, we need $|\lambda|\le 1$ for both $\ell\in\{1, 2\}$, which is equivalent to 
\begin{equation*}
    n \ge \frac{1}{1-\cos\theta} = \frac{2(m-1)^2}{\ell^2 \pi^2} + \cO(1).
\end{equation*}
The threshold $n = \Omega(m^2)$ is not surprising. Consider sampling $m$ matrices from $\{\bA_1, \dots, \bA_n\}$, the probability of no repetition is
\begin{equation*}
    \prod_{i=0}^{m-1} \left(1 - \frac{i}{n}\right)
    \ge \prod_{i=0}^m \exp\left(-\frac{i}{n}\right) = \exp\left(-\frac{m(m-1)}{2n}\right).
\end{equation*}
Thus $n=\Omega(m^2)$ is the dominance threshold for sampling with and without replacement. This inspires the question:

\textit{Is there a universal constant $C_{\ast} > 0$ such that if $n \ge C_{\ast} m^2$, the inequality~\eqref{eq: rr} holds on both sides?}

Note that this adds no further conditions on the PSD matrices themselves, distinguishing it from~\cite{yun2021open, peng2026resolution}, which requires nearly identity PSD matrices. 

\subsection{Main results}
Our main results are the following two theorems. 

\begin{theorem}\label{thm: low}
    If $n\ge 4m^{2}$, the AM-GM inequality~\eqref{eq: rr} holds on the lower bound and
    \begin{equation*}
        \bW_{m,n} \succeq -\left(1 - \frac{1}{2m}\right)\bI.
    \end{equation*}
\end{theorem}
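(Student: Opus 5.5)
The plan is to represent $\bW_{m,n}$ as a signed combination of manifestly PSD matrices indexed by levels of the Boolean cube, and then to control the negative part. For $S\subseteq\{1,\dots,n\}$ set $\bA_S=\sum_{i\in S}\bA_i$, and define the matrix-valued function
\begin{equation*}
  P(t) := \binom{n}{t}^{-1}\sum_{|S|=t} \bA_S^{\,m}, \qquad t\in\{0,1,\dots,n\}.
\end{equation*}
\textbf{Step 1: polynomial structure.} Expand $\bA_S^m$ into words $\bA_{j_1}\cdots\bA_{j_m}$. A word whose indices form a set of $k$ distinct letters survives exactly when that set lies in $S$, which happens with probability $t^{\underline k}/n^{\underline k}$. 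Hence
\begin{equation*}
  P(t)=\sum_{k=1}^m \frac{t^{\underline k}}{n^{\underline k}}\,\bM_k ,
\end{equation*}
where $\bM_k$ is the sum of all words with exactly $k$ distinct letters. Thus $P$ is a matrix polynomial in $t$ of degree $m$. Its top coefficient is $\bM_m/n^{\underline m}=\bW_{m,n}$ by \eqref{eq: W}.

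Consequently, for any $m+1$ distinct nodes $t_0,\dots,t_m$ the divided-difference (Lagrange) identity gives
\begin{equation*}
  \bW_{m,n}=\sum_{j=0}^m w_j P(t_j), \qquad w_j=\prod_{i\ne j}(t_j-t_i)^{-1}.
\end{equation*}
This is the ``vertex measure'' on the cube: the uniform measure on level $t_j$ with mass $w_j$. It extracts exactly the distinct-index part. Since each $P(t_j)\succeq 0$, the positive-weight nodes may simply be dropped, which leaves
\begin{equation*}
  \bW_{m,n}\succeq -\sum_{w_j<0}|w_j|\,P(t_j).
\end{equation*}

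\textbf{Step 2: choice of nodes.} I would take the $t_j$ at (rounded, mock-)Chebyshev nodes of degree $m+1$ on an interval of levels. This choice minimizes $\sum|w_j|$ among node sets of a given spread, and makes the $w_j$ alternate in sign.

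The crude bound $P(t)\preceq n^m\bI$ (from $\bA_S\preceq n\bI$) is far too weak. With it, the negative mass is of order $4^m$. So the real content is a sharper, level-dependent estimate of the form
\begin{equation*}
  P(t)\preceq \phi(t)\,\bI,
\end{equation*}
obtained by averaging over $|S|=t$. Here I would follow \cite{grigoriev2001complexity}. Decompose the level-$t$ averaging into Boolean harmonics. The degree-$\le m$ polynomial structure of Step 1 then lets one bound $\sum_{w_j<0}|w_j|\phi(t_j)$ by the error of a Gauss--Chebyshev-type quadrature applied to a function dominated by $(t/n)^m$-like growth, using only $\sum_i\bA_i\preceq n\bI$. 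The target is to show that this weighted negative mass is at most $1-\frac{1}{2m}$.

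\textbf{Main obstacle.} The Chebyshev nodes $\frac n2(1+\cos\frac{(2j+1)\pi}{2m+2})$ are not integers, while $P$ (as a PSD object) only makes sense at integer levels. Rounding perturbs the weights $w_j$, and in the naive extension to non-integer $t$ the measure is not positive. My first attempt would be the following. The closest pair of Chebyshev nodes near the endpoints is separated by about $n\pi^2/(2m^2)$. So when $n\ge 4m^2$ each node can be moved to an integer level with relative displacement $O(m^2/n)\le \tfrac14$ of the local spacing. I would then show, via a product estimate on $\prod_{i\ne j}|t_j-t_i|$ comparing against $|T_{m+1}'|$, that the perturbed weights keep their signs. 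The same estimate should show that the perturbation changes the quadrature bound by at most a factor $(1+O(m^2/n))^m$, which the condition $n\ge 4m^2$ is meant to absorb into the slack $\frac1{2m}$. Verifying this quantitative perturbation bound is where I expect the constant $4$ to be decided and where the argument is most likely to need adjustment.
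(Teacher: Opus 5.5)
Your Step 1 is correct. It is the paper's vertex-measure identity with the monomial $x^m$ as integrand, which is exactly the choice the paper rejects. The trouble is the relaxation $\bW_{m,n}\succeq-\sum_{w_j<0}|w_j|P(t_j)$: it is already hopeless, however you later bound $P$.

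Take $\bA_1=n\bI$ and $\bA_i=0$ for $i\ge2$. Then $\bW_{m,n}=0$ for $m\ge2$, while $P(t)=t\,n^{m-1}\bI$. Two properties of the divided difference finish the argument. It annihilates $t$, so the positive and negative parts of $\sum_j w_jt_j$ are equal. It also gives $\sum_j w_jt_jq(t_j)=1$ for every monic $q$ of degree $m-1$. Choose $q$ to be the monic Chebyshev polynomial on $[0,n]$, whose sup norm is $2(n/4)^{m-1}$. Then for \emph{any} $m+1$ nodes in $[0,n]$,
\[
\sum_{w_j<0}|w_j|\,P(t_j)=\frac12\Bigl(\sum_j|w_j|\,t_j\Bigr)n^{m-1}\bI\succeq 4^{m-2}\bI .
\]
So the monomial integrand depends on exponential cancellation between positive and negative nodes, and discarding the positive ones is fatal. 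No level-dependent $\phi$ and no harmonic decomposition can repair this; the paper uses Grigoriev's lemma only for the upper bound.

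The missing idea is that the divided difference annihilates all lower-degree terms. You may therefore replace $\bA_S^m$ by any degree-$m$ polynomial in $\bA_S/n$ with known leading coefficient. The paper effectively uses $Q(t)=\binom{n}{t}^{-1}\sum_{|S|=t}T_m(\bI-2\bA_S/n)$. Its leading coefficient in $t$ is $(-1)^m2^{2m-1}n^{-m}\bW_{m,n}$, it satisfies $\|Q(t)\|_{\rm op}\le1$ at integer levels, and $Q(0)=\bI$ exactly. Then $\bW_{m,n}=\sum_ja_jQ(t_j)\succeq-\bigl(\sum_j|a_j|-2a_0\bigr)\bI$.

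To beat $-\bI$ you need the nodes at the rounded Chebyshev \emph{extrema} $\frac n2(1-\cos\frac{j\pi}{m})$, including the endpoint $0$. These give $\sum_j|a_j^*|=1$ and $a_0^*=\frac1{2m}$. The zeros of $T_{m+1}$ you propose give $\sum_j|a_j|\to4/\pi>1$, and no node where the integrand is known exactly.

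Your perturbation plan would also not close the $\frac1{2m}$ gap. A product estimate on $\prod_{i\ne j}|t_j-t_i|$ only yields $|a_j|\le e^{O(m^2/n)}|a_j^*|$, i.e.\ $\sum_j|a_j|=O(1)$. A factor $(1+O(m^2/n))^m$ is even exponential when $n\asymp m^2$. The paper uses the product bound only as an $O(1)$ factor. Set $x_j=1-2t_j/n$ and $c_j=\cos\frac{j\pi}{m}$. The sharp estimate comes from
\[
1=\sum_ja_jT_m(x_j)=\sum_j|a_j|+\sum_ja_j\bigl(T_m(x_j)-T_m(c_j)\bigr).
\]
Since the $c_j$ are extrema of $T_m$, the error is second order, $O\bigl(m^4/(n^2\min(j,m-j)^2)\bigr)$. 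This gives $\sum_j|a_j|\le1+\frac3{10m}$, and together with $a_0>\frac2{5m}$ yields the bound $-(1-\frac1{2m})\bI$.
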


\begin{theorem}\label{thm: both}
    If $n \ge 2 \lceil m/2 \rceil^2$, the AM-GM inequality~\eqref{eq: rr} holds on both bounds.
\end{theorem}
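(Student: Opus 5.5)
The plan is to reduce $\bW_{m,n}$ to a signed quadrature of genuinely PSD and bounded matrices, obtained from random subsets of indices. For $\eps\in\{0,1\}^n$ put $S(\eps)=\sum_i \eps_i\bA_i$. Then $0\preceq S(\eps)\preceq \sum_i\bA_i\preceq n\bI$, so $0\preceq S(\eps)^m\preceq n^m\bI$. For $0\le k\le n$ define the level average
\begin{equation*}
F(k)=\binom{n}{k}^{-1}\sum_{|\eps|=k} S(\eps)^m .
\end{equation*}
Expanding $S(\eps)^m$ over words $j_1\cdots j_m$, a word whose set of distinct indices $T$ has size $t$ contributes with weight $\binom{n-t}{k-t}/\binom{n}{k}=\binom{k}{t}/\binom{n}{t}$. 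This weight is a polynomial of degree $t$ in $k$. Hence $F(k)=\sum_{t=0}^m \binom{k}{t}\binom{n}{t}^{-1}\bM_t$, where $\bM_t$ collects all words with exactly $t$ distinct letters, and $\bM_m=\frac{n!}{(n-m)!}\bW_{m,n}$. So $F$ extends to a matrix-valued polynomial of degree $m$ on all of $\bbR$, and $\bW_{m,n}$ is, up to the explicit normalization $\binom{n}{m}^{-1}m!\,\frac{n!}{(n-m)!}$, its leading coefficient in the basis $\binom{k}{t}$. Equivalently, $\bW_{m,n}=\frac{(n-m)!}{n!}\binom{n}{m}\Delta^m F(0)$ up to the factor $m!/m!$; the constants are routine.

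The leading coefficient of a degree-$m$ polynomial is a linear functional $L(p)=\sum_j w_j p(x_j)$ for any $m+1$ nodes. I will take the nodes to be the Chebyshev extrema of $[0,n]$ rescaled, so that the weights $w_j$ alternate in sign and $\sum_j|w_j|$ is as small as possible: it is $2^{m-1}/(n/2)^m$ times a constant, since $T_m$ minimizes the sup norm among monic polynomials. The positive part $L_+$ and the negative part $L_-$ then each have mass about $\frac12\sum|w_j|$. Applying $L$ to $F$ and using $0\preceq F(x_j)\preceq n^m\bI$ at every node gives $-n^m\|L_-\|\bI\preceq \bW\text{-part}\preceq n^m\|L_+\|\bI$. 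Writing $n=2r^2$ with $r=\lceil m/2\rceil$, the count $(n/2)^m\cdot 2^{-(m-1)}$ has to be compared with $n!/(n-m)!\cdot m!^{-1}\binom{n}{m}$. I expect the parity refinement to enter here: exploit the even/odd symmetry of $T_m$ about the center $k=n/2$, using complementation $\eps\mapsto \mathbf{1}-\eps$, and effectively use a Chebyshev polynomial of degree $\lceil m/2\rceil$ in the variable $(k-n/2)^2$. This should produce exactly the threshold $2\lceil m/2\rceil^2$.

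The main obstacle is that the Chebyshev nodes $x_j$ are not integers, so $F(x_j)$ is a polynomial extrapolation rather than an average of PSD matrices; a priori it is neither PSD nor bounded by $n^m$. Following \cite{grigoriev2001complexity}, I would handle this with the Boolean harmonic decomposition. Decompose $S(\eps)^m$ restricted to level $k$ into its symmetric components. For a non-integer level $x$, write $F(x)$ as a quadrature $\sum_k c_k(x)F(k)$ over nearby integer levels, using mock-Chebyshev nodes, i.e.\ the integers nearest the Chebyshev nodes. This moves the problem to a purely scalar question: show that the signed quadrature with integer mock-Chebyshev nodes still has positive and negative masses each bounded by the required constant once $n\ge 2\lceil m/2\rceil^2$. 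I would first try to verify this by comparing node by node with the continuous Chebyshev weights, controlling the rounding error of at most $1/2$ per node. The node spacing near the endpoints, of order $n/m^2\ge 1$, is what makes the rounding harmless precisely at this threshold.
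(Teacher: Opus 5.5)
Your proposal has two genuine gaps, and either one alone keeps it from reaching $\|\bW_{m,n}\|_{\mathrm{op}}\le 1$.

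\textbf{The monomial loses a factor $2^{2m-2}$.} Your normalization is actually exact: $F(k)=\sum_t\frac{(k)_t}{(n)_t}\bM_t$, so $\bW_{m,n}=[k^m]F(k)$, and there is no further constant to ``compare''. Take the Chebyshev extrema $s_j=\frac n2(1-\cos\frac{j\pi}{m})$ (your $x_j$). The leading-coefficient weights satisfy $\sum_j|w_j|=2^{2m-1}/n^m$, split equally between the two signs. So even if you grant $0\preceq F(s_j)\preceq n^m\bI$ at every node, you only get $-2^{2m-2}\bI\preceq\bW_{m,n}\preceq 2^{2m-2}\bI$.

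This bound cannot be improved using only that information. The scalar polynomial $\frac{n^m}{2}\bigl(1+T_m(\frac{2x}{n}-1)\bigr)$ takes values in $[0,n^m]$ and has leading coefficient $2^{2m-2}$. Hence no parity refinement of this estimate can bring it down to $1$.

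The missing idea is to apply the functional to the Chebyshev polynomial rather than the monomial. Set $\bB=S/n$ and $\Psi(x)=\int T_m(\bI-2\bB(\bz))\,d\nu_x(\bz)$. Each lower power $\int\bB^r\,d\nu_x$ has degree $\le r$ in $x$, so $\Psi$ has leading coefficient $(-1)^m2^{2m-1}n^{-m}\bW_{m,n}$. At the same time $\|T_m(\bI-2\bB(\bz))\|\le 1$ on the cube. After normalization the weights $a_j$ ($\pm\frac1m$, and $\pm\frac1{2m}$ at the ends) have total mass exactly $1$.

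\textbf{Rounding the nodes cannot give a sharp inequality.} Even with $\Psi$, any argument that only uses $\|\Psi(s_j)\|\le 1$ at the nodes yields $\bW_{m,n}\preceq(\sum_j|a_j|)\bI$. With $t_j=1-2s_j/n$ we have $1=\sum_j a_jT_m(t_j)\le\sum_j|a_j|$. Equality forces $|T_m(t_j)|=1$ at all $m+1$ nodes, i.e.\ the nodes must be exactly the Chebyshev extrema, which in general are not integers.

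Mock-Chebyshev rounding, or interpolating $\Psi(s_j)$ through nearby integer levels, is just another leading-coefficient functional on integer nodes. Its mass is therefore strictly above $1$, and it gives at best $\bW_{m,n}\preceq(1+\delta)\bI$. Since $\bA_i=\bI$ gives $\bW_{m,n}=\bI$, there is no slack to absorb $\delta$. This is why the paper uses rounding only for the non-sharp lower bound, and only when $n\ge 4m^2$.

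Your claim that rounding is harmless at the threshold also fails. There $s_1=n\sin^2\frac{\pi}{2m}\approx\pi^2/8$, so a rounding error of up to $\frac12$ is an order-one relative perturbation near the endpoints.

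\textbf{What the paper does instead.} It keeps the exact non-integer extrema and never bounds $\Psi(s_j)$ node by node.
\begin{itemize}
\item It writes $\bI\mp T_m(\bX)=\bF_{m,\mp}\bF_{m,\mp}^\top$ with $\deg_{\bz}\bF_{m,\mp}\le d=\lceil m/2\rceil$, using Chebyshev polynomials of the second to fourth kinds.
\item Then $\langle\bv,(\bI-\bW_{m,n})\bv\rangle$ equals the odd-indexed half of the signed quadrature applied to $\|\bF_{m,+}^\top\bv\|^2$, plus the even-indexed half applied to $\|\bF_{m,-}^\top\bv\|^2$. The parities swap for $\bI+\bW_{m,n}$. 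The relevant parity is this splitting of the nodes by the sign of their weight, not a symmetry of the cube.
\item Grigoriev's harmonic decomposition is applied to these squares of degree-$d$ polynomials. It is not applied to $S(\eps)^m$, and it is not used to move non-integer levels to integer ones. It reduces everything to the scalar inequalities $\sum_j|a_j|\frac{(s_j)_k(n-s_j)_k}{(n)_{2k}}|p(t_j)|^2\ge 0$ over each parity class, for all $p\in\bbP_{d-k}$.
\end{itemize}

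A single $\nu_{s_j}$ is guaranteed nonnegative on such squares only when $s_j\in[d-1,n-d+1]$. That alone gives merely $n\gtrsim m^3/2$. At $n=2d^2$ with $d\ge 3$ one has $s_1<d-1$, so positivity must come from each parity sum as a whole.

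The paper obtains this from the exact formula $\frac1m\sum_{\omega^m=\eta}G(\omega)=g_0+\eta(g_m+g_{-m})$ for Laurent polynomials $G=\sum_{|j|\le 2d}g_j\omega^j$, combined with an iterative coefficient-norm estimate. That is where the condition $2d^2\le n$ actually enters.
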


Theorem~\ref{thm: low} and Theorem~\ref{thm: both} answer the question positively. The constant satisfies $\frac{1}{2} \ge C_{\ast} \ge \frac{2}{\pi^2}$ as $m\to \infty$, but the optimal value remains unknown (likely $4/\pi^2$). The lower bound in~\eqref {eq: rr} is easier to prove because it is generally \textit{not} sharp, as numerical evidence suggests in~\cite{lai2020recht}. Hence, it leaves enough room for relaxation techniques. However, proving the upper bound appears harder, since the equality can hold sharply at $\bA_i= \mathbf{I}$.  

The rest of this note is organized as follows. The proofs of Theorem~\ref{thm: low} and Theorem~\ref{thm: both} are in Section~\ref{sec: low} and Section~\ref{sec: high}, respectively. 
We emphasize that the constants in the above theorems are not fully optimized. The concluding remarks are in Section~\ref{sec: con}.

\subsection{AI disclosure}
The author did not use any AI tools for Theorem~\ref{thm: low}. After we gave GPT-6 Astra the proof of Theorem~\ref{thm: low}, it returned the key reference~\cite{grigoriev2001complexity} (see Theorem~\ref{thm: pos}), and using its conclusion, we immediately obtained $m=\cO(n^{1/3})$ in Theorem~\ref{thm: m3}. 
The proof of Theorem~\ref{thm: both} was found by GPT-6 Astra gradually. It first improved our $m = \cO(n^{1/3})$ to an intermediate regime $m=\cO(n^{3/8})$ by a coarse estimate on the negative contributions. It soon found a complicated approach that bookkept various contributions to $m = \cO(n^{1/2}\log\log n/\log n)$ but could not lift the final logarithmic factor. After we asked it to exploit the specialness of parity Chebyshev nodes, it realized we had neglected an important polynomial property of the harmonic decomposition. Then it found an elegant route to Theorem~\ref{thm: both}. 

We should notice that GPT-6 Astra's Theorem~\ref{thm: both} covers part of our Theorem~\ref{thm: low} and Theorem~\ref{thm: m3}. However, we believe the proofs of Theorem~\ref{thm: low} and Theorem~\ref{thm: m3} are still useful, reveal the underlying difficulties, and help understanding. The author is responsible for all of the presented work.

\section{Proof on lower bound}\label{sec: low}
Let $\bbP_{m}$ be real-valued degree-$m$ polynomials. For a finite signed measure $\nu$, let $\|\nu\|_{\rm TV}$ be the total variation norm of $\nu$. Denote the normalized PSD matrices $\bE_i := \frac{1}{n}\bA_i$, then $\bM:= \sum_{i=1}^n \bE_i \preceq \mathbf{I}$. 
Define the base matrix-valued polynomial on the Boolean hypercube:
\begin{equation*}
    \bB(\bz) = \sum_{i=1}^n z_i \bE_i,  \qquad \bz = (z_1,\dots, z_n)\in \{0, 1\}^n.
\end{equation*}
Then, $\bB^p(\bz)\preceq \mathbf{I}$ for any $p\ge 0$.  Let  $F$ be a scalar degree-$m$ polynomial with leading coefficient as $(-1)^{\eps}$, $\eps\in\{0, 1\}$, then 
\begin{equation}\label{eq: W}
    \bW_{m, n} =(-1)^{\eps} \frac{n^m (n-m)!}{n!} \sum_{j_1,\dots, j_m\text{ distinct }}[z_{j_1}\dots z_{j_m}] F(\bB(\bz)),
\end{equation}
Any monomial on the Boolean hypercube is square-free, using $z^2=z$ for $z\in\{0, 1\}$. We always assume a polynomial has been reduced to the square-free form. The notation $[z_{i_1}\cdots z_{i_r}] f$ means the coefficient of $z_{i_1}\cdots z_{i_r}$ in $f$. 
To extract these coefficients, we need to design a finite signed measure $\mu_{m,n}$ that suppresses other terms along with the normalization constant:
\begin{equation}\label{eq: w nm int}
    \bW_{m,n} = \int_{\{0, 1\}^n} F(\bB(\bz)) d\mu_{m,n}(\bz).
\end{equation}
If the measure $\mu_{m, n}(\{\bzero\}) = p \ge 0$, then 
\begin{equation*}
\begin{aligned}
    \bW_{m,n} &= \int_{[0, 1]^n}  F(\bB(\bz)) d\mu_{m,n}(\bz) \\&=  F(0) p\mathbf{I}  +  \int_{\{0, 1\}^n - \{\bzero\}} F(\bB(\bz)) d\mu_{m,n}(\bz) \\
    &\succeq F(0) p \mathbf{I}  - \|F\|_{\infty}  (\|\mu_{m, n}\|_{\rm TV} - p) \mathbf{I} \\
    &=  \left( (F(0) + \|F\|_{\infty})p - \|F\|_{\infty} \|\mu_{m, n}\|_{\rm TV} \right) \mathbf{I}.
\end{aligned}
\end{equation*}
Therefore, we obtain a relaxation problem: finding a degree-$m$ polynomial $F$ and a finite signed measure $\mu_{m,n}$ such that~\eqref{eq: w nm int} holds and 
\begin{equation}\label{eq: cond}
     (F(0) + \|F\|_{\infty}) p - \|F\|_{\infty} \|\mu_{m, n}\|_{\rm TV} \ge -1.
\end{equation}
We emphasize that $\mu_{m,n}$ does not depend on $F$.
The condition~\eqref{eq: cond} rules out the simple choice $F(t) = t^m$, which wastes the contribution from $F(0)$. A better candidate choice needs the polynomial satisfies $F(0) = \|F\|_{\infty}$ and $\|F\|_{\infty}$ as small as possible. This leaves us with the Chebyshev polynomial $T_m(1-2t)$ as a first choice for $F$, the transform $t\mapsto 1-2t$ maps $[0, 1]$ to the standard interval $[-1,1]$ for Chebyshev polynomials. The leading coefficient has sign $(-1)^m$. The remaining task becomes finding $\mu_{m,n}$ that satisfies the linear constraints~\eqref{eq: w nm int}, the inequality constraint $\mu_{m, n}(\{\bzero\}) > 0$, and minimizes the cost $\|\mu_{m,n}\|_{\rm TV} - 2 \mu_{m, n}(\{\bzero\}) $. This is a linear programming problem, but we do not need to solve it accurately. A simple relaxation on minimization of $\|\mu_{m,n}\|_{\rm TV}$ already gives a clean proof.

\subsection{Vertex measure}
Essentially, $\mu_{m,n}$ is a scaled uniform measure supported on vertices $\{0, 1\}^n$ with exactly $m$ coordinates equal to $1$. For each $0\le s\le n$, let the vertex measure
\begin{equation*}
    \nu_{s} := \frac{1}{\binom{n}{s}} \sum_{\substack{S\subset [n]\\|S| = s}}\,\prod_{i\in S} \delta_{z_i=1} \otimes \prod_{i\notin S} \delta_{z_i=0}, 
\end{equation*}
which forms a uniform probability measure on the index sets $S\subset [n]$ with $|S|=s$. For any distinct indices $j_1,\dots, j_r$, we have 
\begin{equation*}
    \int_{\{0, 1\}^n} z_{j_1}\cdots z_{j_r} d\nu_s(\bz) = \frac{\binom{n-r}{s-r} }{\binom{n}{s}} = \frac{s(s-1)\cdots (s-r+1)}{n(n-1)\cdots (n-r+1)}.
\end{equation*}
The numerator $\binom{n-r}{s-r}$ counts the ways of selecting $s-r$ indices from the remaining $n-r$ indices to form a unique set of size $s$. In particular, if $r=0$, the integral yields one for all $s$. For convenience, we use the standard notation for later analysis.
\begin{equation*}
    (a)_b := a(a-1)\cdots (a-b+1),\qquad (a)_0 := 1,\qquad a\in\bbR, \; b\in\bbN.
\end{equation*}

Suppose the Chebyshev polynomial $F(\bB(\bz)) = T_m(1-2\bB(\bz))$ expands into
\begin{equation}\label{eq: chby}
F(\bB(\bz)) = \sum_{r=0}^m c_r \sum_{j_1,\cdots, j_r \text{ distinct}} z_{j_1}\cdots z_{j_r} \prod_{l=1}^r E_{j_1}\cdots E_{j_r},
\end{equation}
where $c_m=(-1)^m$. Then, the integral with $\nu_s$ becomes 
\begin{equation}\label{eq: fb}
    \int_{\{0, 1\}^n} F(\bB(\bz)) d\nu_s(\bz) = \sum_{r=0}^m c_r  \frac{(s)_r}{(n)_r} \sum_{j_1,\cdots, j_r \text{ distinct}} \prod_{l=1}^r E_{j_1}\cdots E_{j_r}.
\end{equation}
There are $n+1$ vertex measures $\nu_{s}$, $s=0,\cdots, n$. The constraints~\eqref{eq: w nm int} involve $m+1$ linear equations, which means we need to find distinct integers $s_0,\dots, s_m\in \{0, 1, \dots, n\}$ and corresponding coefficients $a_0,\dots, a_m$ such that 
\begin{equation}\label{eq: mu cons}
    \mu_{m,n} = \sum_{j=0}^m a_j \nu_{s_j},\qquad     \sum_{j=0}^m a_j \frac{(s_j)_r}{(n)_r} = \begin{cases}
        0,\quad &0\le r < m,\\
        (-1)^{m} 2^{1-2m}\frac{n^m (n-m)!}{ n!},\quad &r=m.
    \end{cases}
\end{equation}
At $r=m$, the factor $(-1)2^{1-2m}$ comes from the leading coefficient of $T_m(1-2t)$. The other factors are from~\eqref{eq: W}. More importantly, 
\begin{equation*}
    \mu_{m,n}(\{\bzero\}) = a_0,\qquad \|\mu_{m,n}\|_{\rm TV} = \sum_{j=0}^m |a_j|.
\end{equation*}
Therefore, using $|F(t)|\le 1$ on $t\in[0, 1]$ and~\eqref{eq: cond}, we only need to show 
\begin{equation}\label{eq: new cond}
a_0 \ge 0,\qquad  \sum_{j=0}^m |a_j| - 2 a_0 \le 1.
\end{equation}
\subsection{Interpolation}\label{sec: inter}
The equality constraints~\eqref{eq: mu cons} can be viewed as polynomial interpolation at distinct nodes $s_0,\dots, s_m$:
\begin{equation*}
    p_r(s) =  \sum_{j=0}^m a_j \frac{(s)_r}{(n)_r}\in\bbP_r.
\end{equation*}
Thus, $\{p_r\}_{r=0}^m$ forms a basis of $\bbP_m$, and the quadrature rule $(a_j, s_j)_{j=0}^m$ essentially extracts the coefficient of $s^m$ with a rescaling $\frac{(-1)^m n^m}{2^{2m-1}}$. To find the weights $a_j$, we consider the polynomials 
\begin{equation*}
    p_j(s) = \prod_{l=0, l\neq j}^{m} \left(\frac{2s_l}{n} - \frac{2s}{n}\right), \qquad 0\le j\le m,
\end{equation*}
then 
\begin{equation*}
    \sum_{l=0}^m a_l p_j(s_l) = \frac{(-1)^m n^m}{2^{2m-1}} \cdot \frac{(-1)^m}{n^m} 2^m = \frac{1}{2^{m-1}}. 
\end{equation*}
The only surviving term on the left-hand side is $a_jp_j(s_j)$, thus 
\begin{equation}\label{eq: aj lag}
   a_j = 2^{1-m} \left( \prod_{l=0, l\neq j} \left(\frac{2s_l}{n} - \frac{2s_j}{n}\right) \right)^{-1}.
\end{equation}
In particular, for the Chebyshev polynomial $T_m(1 - 2s/n)$, the leading coefficient is $2^{2m-1}(-n)^{-m}$. Therefore, 
\begin{equation*}
    \sum_{j=0}^m a_j T_m\left(1-\frac{2s_j}{n}\right) = 1.
\end{equation*}
This relation immediately gives an estimate of the total variation of $\mu_{m,n}$:
\begin{equation*}
    1 =\left|\sum_{j=0}^m a_j T_m\left(1-\frac{2s_j}{n}\right) \right| \le \|\mu_{m,n}\|_{\rm TV} \|T_m\|_{\infty} = \|\mu_{m,n}\|_{\rm TV},
\end{equation*}
which leads to a lower bound $ \|\mu_{m,n}\|_{\rm TV} \ge 1 $.
When the above equality holds, we need the interpolation points to match the Chebyshev extrema:
\begin{equation}\label{eq: cheby}
    1 - \frac{2s_j^{\ast}}{n} = \cos\left(\frac{j\pi}{m}\right).
\end{equation}
Then, the weight $a_j^{\ast}$ follows~\eqref{eq: aj lag}:
\begin{equation}\label{eq: aj}
    a_j^{\ast} = 2^{1-m}\left( \prod_{l=0, l\neq j}^m \left(\cos(j\pi/m) - \cos(l\pi/m) \right) \right)^{-1} =  \begin{cases}
    (-1)^j \frac{1}{2m}, &\quad  j = 0, m,\\
     (-1)^j \frac{1}{m}, &\quad  1\le j \le m-1.
    \end{cases}
\end{equation}
The proof of the identities~\eqref{eq: aj} is in Appendix~\ref{sec: aj}.
This shows that
\begin{equation}
    a_0^{\ast} = \frac{1}{2m} > 0,\qquad \sum_{j=0}^m |a_j^{\ast}| - 2 a_0^{\ast} \le\left(1 - \frac{2}{2m}\right) <  1.
\end{equation}
Thus~\eqref{eq: new cond} holds. However, the Chebyshev extrema~\eqref{eq: cheby} \textit{cannot} force $s_j^{\ast}$ to be integers. A natural choice is to set $s_j$ to the closest integer to $s_j^{\ast}$ in~\eqref{eq: cheby}, which only shifts Chebyshev extrema by at most $\frac{1}{n}$. We still need to show that such a small perturbation won't destroy the condition~\eqref{eq: new cond}. 

\begin{remark}
    This relaxed lower-bound problem is closely related to finding approximate Chebyshev nodes (extrema are nodes of second kind) from an equispaced grid. These nodes are known as ``mock-Chebyshev''. It is a well-studied topic in numerical analysis~\cite{coppersmith1992growth,platte2011impossibility,adcock2019optimal}. The first gap between Chebyshev extrema is $1 - \cos(\pi/m) \sim \frac{\pi^2}{2m^2}$, which needs to be resolved by the grid size $\frac{2}{n}$ for a stable interpolation. This gives a conjectured optimal constant $C_{\ast} = 4/\pi^2$. The stability of interpolation is precisely characterized by $\sum_{i=0}^m |a_i|$.
\end{remark}

\subsection{Perturbative estimate}
In this section, we estimate the effect of the tiny perturbations from the Chebyshev extrema.  Take 
\begin{equation}\label{eq: tj}
   c_j = \cos(j\pi/m),\qquad  s_j = \left\lfloor \frac{1}{2} + \frac{n(1 - c_j)}{2}\right\rfloor,\qquad t_j = 1 - \frac{2s_j}{n},\qquad 0\le j\le m. 
\end{equation}
Then $s_j$ is the integer that minimizes $|c_j - t_j|$, and $s_0 = 0$, $s_m = n$, $t_0 = c_0 = 1$, $t_m = c_m = -1$. It remains to estimate the correponding $\{a_j\}_{j=0}^m$ for the new nodes $\{t_j\}_{j=0}^m$.

\begin{lemma}\label{lem: sep}
If $n > m^2$, then $0 = s_0 < s_1 < \dots < s_m$.
\end{lemma}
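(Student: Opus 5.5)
The plan is to compare each $s_j$ with its real target $s_j^{\ast} = \frac{n(1-c_j)}{2}$. By~\eqref{eq: tj}, $s_j$ is obtained by rounding $s_j^{\ast}$ to the nearest integer, so $|s_j - s_j^{\ast}| \le \frac12$. The map $x \mapsto \lfloor \frac12 + x\rfloor$ is nondecreasing, and the $s_j^{\ast}$ are strictly increasing in $j$ because $c_j$ is strictly decreasing on $[0,\pi]$. Hence $s_0 \le s_1 \le \dots \le s_m$ holds automatically, and strictness follows once consecutive targets are more than $1$ apart: $s_{j+1}^{\ast} - s_j^{\ast} > 1$ forces $s_{j+1} \ge s_{j+1}^{\ast} - \frac12 > s_j^{\ast} + \frac12 \ge s_j$. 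Also $s_0 = \lfloor \frac12 \rfloor = 0$.

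So it suffices to show that $\frac{n}{2}(c_j - c_{j+1}) > 1$ for every $0 \le j \le m-1$. First write
\begin{equation*}
c_j - c_{j+1} = 2\sin\left(\frac{(2j+1)\pi}{2m}\right)\sin\left(\frac{\pi}{2m}\right).
\end{equation*}
The first sine factor is minimized at the endpoints $j=0$ and $j=m-1$. This is consistent with the remark that the smallest gap is the first one, so the gap is at least $1 - \cos(\pi/m) = 2\sin^2\left(\frac{\pi}{2m}\right)$. It then remains to verify that
\begin{equation*}
n \sin^2\left(\frac{\pi}{2m}\right) > 1 \quad \text{whenever } n > m^2 .
\end{equation*}

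For this last inequality I will use $\sin x \ge \frac{2}{\pi}x$ on $[0,\pi/2]$ with $x = \frac{\pi}{2m}$, which gives $\sin^2\left(\frac{\pi}{2m}\right) \ge \frac{1}{m^2}$. Combined with $n > m^2$, this yields $n\sin^2\left(\frac{\pi}{2m}\right) > 1$. The case $m = 1$ is trivial, since then $s_1 = n > 0$. The main (mild) obstacle is this constant: Jordan's inequality is just barely enough to match the hypothesis $n > m^2$, so I must check the endpoint gap and the strict inequalities carefully rather than use the asymptotic $\pi^2/(2m^2)$. The rest is monotonicity bookkeeping.
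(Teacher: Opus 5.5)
Your proposal is correct and follows essentially the same route as the paper. Both arguments round to within $\frac12$ of $s_j^{\ast}=\frac{n(1-c_j)}{2}$, use $c_j-c_{j+1}=2\sin\bigl(\frac{(2j+1)\pi}{2m}\bigr)\sin\bigl(\frac{\pi}{2m}\bigr)\ge 2\sin^2\bigl(\frac{\pi}{2m}\bigr)$, and apply Jordan's inequality to get a gap of at least $2/m^2>2/n$; the only cosmetic difference is that you argue strict increase directly, whereas the paper assumes $s_j=s_{j+1}$ and derives the contradiction $|c_j-c_{j+1}|\le 2/n$.
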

\begin{proof}
    Clearly, the sequence is non-decreasing. If $s_j = s_{j+1}$ for some $0\le j\le m-1$, then
    \begin{equation*}
        \left| \frac{n(1 - c_j)}{2} - \frac{n(1 - c_{j+1})}{2} \right| \le 1 \Longrightarrow |c_j - c_{j+1}|\le \frac{2}{n}.
    \end{equation*}
    On the other hand, 
    \begin{equation*}
    |c_{j} - c_{j+1}| = 2 \sin\left(\frac{\pi}{2m}\right)\sin\left(\frac{2j+1}{2m}\pi\right) \ge 2 \left(\frac{2}{\pi}\frac{\pi}{2m}\right)^2 = 2m^{-2} > \frac{2}{n}.
    \end{equation*}
    Contradiction.
\end{proof}
We can now prove a slightly stronger lower bound, which proves Theorem~\ref{thm: low}. 
\begin{lemma}\label{thm: low strong}
Let $n\ge 4m^2$. The matrix $\bW_{m,n}$ in~\eqref{eq: W} satisfies 
\begin{equation*}
    \bW_{m, n} \succeq -\left(1 - \frac{1}{2m}\right)\mathbf{I}.
\end{equation*}
\end{lemma}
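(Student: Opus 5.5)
The plan is to take the measure $\mu_{m,n}=\sum_{j=0}^m a_j\nu_{s_j}$ built on the rounded nodes~\eqref{eq: tj}, with $F(t)=T_m(1-2t)$, and to check the two conditions in~\eqref{eq: new cond} in the quantitative form
\begin{equation*}
a_0\ge 0,\qquad \sum_{j=0}^m |a_j| - 2a_0 \le 1-\frac{1}{2m}.
\end{equation*}
By the chain of inequalities preceding~\eqref{eq: cond}, with $F(0)=\|F\|_\infty=1$, this gives $\bW_{m,n}\succeq -(1-\frac{1}{2m})\bI$. Since $n\ge 4m^2>m^2$, Lemma~\ref{lem: sep} says the $s_j$ are distinct. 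Hence the Lagrange formula~\eqref{eq: aj lag} applies with the $t_j$ in place of the $c_j$:
\begin{equation*}
a_j = 2^{1-m}\Big(\prod_{l\ne j}(t_l-t_j)\Big)^{-1}.
\end{equation*}
Strict monotonicity of the $t_j$ shows that $a_j$ has sign $(-1)^j$, exactly as $a_j^\ast$ does. In particular $a_0>0$, and $\sum_j|a_j|-2a_0 = \sum_{j=1}^m |a_j| - a_0$.

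The core of the argument is to compare $a_j$ with $a_j^\ast$ from~\eqref{eq: aj} through the ratio
\begin{equation*}
\frac{a_j}{a_j^\ast}=\prod_{l\ne j}\frac{c_l-c_j}{t_l-t_j}=\prod_{l\ne j}\Big(1+\frac{\delta_l-\delta_j}{c_l-c_j}\Big)^{-1},
\end{equation*}
where $\delta_l=t_l-c_l$. Here $|\delta_l|\le 1/n$, and $\delta_0=\delta_m=0$.

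For the denominators I will use $|c_l-c_j| = 2\sin\frac{|l-j|\pi}{2m}\,\sin\frac{(l+j)\pi}{2m}$ together with $\sin x\ge 2x/\pi$ on $[0,\pi/2]$. With $k_{lj}=\min(l+j,\,2m-l-j)$, this gives
\begin{equation*}
|c_l-c_j|\ge \frac{2|l-j|\,k_{lj}}{m^2}.
\end{equation*}
Therefore each relative perturbation $\eta_{lj}=(\delta_l-\delta_j)/(c_l-c_j)$ satisfies
\begin{equation*}
|\eta_{lj}|\le \frac{m^2}{n|l-j|k_{lj}}\le \frac{1}{4|l-j|k_{lj}}\le\frac14 .
\end{equation*}
I would then bound the product by
\begin{equation*}
\Big|\log\frac{a_j}{a_j^\ast}\Big|\le \sum_{l\ne j}\frac{|\eta_{lj}|}{1-|\eta_{lj}|}\le \frac43\sum_{l\ne j}|\eta_{lj}| =: \varepsilon_j ,
\end{equation*}
handling $a_0$ separately. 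For $j=0$ we have $\delta_0=0$, so only $\delta_l$ enters, and $k_{l0}=l$ when $l\le m$; hence $a_0\ge a_0^\ast e^{-\varepsilon_0}$ with $\varepsilon_0\lesssim\frac13\sum_l l^{-2}$. Finally,
\begin{equation*}
\sum_{j\ge1}|a_j|-a_0\le \Big(\sum_{j\ge1}|a_j^\ast|-a_0^\ast\Big)+\sum_{j\ge1}|a_j^\ast|\big(e^{\varepsilon_j}-1\big)+a_0^\ast\big(1-e^{-\varepsilon_0}\big),
\end{equation*}
and the first bracket equals $1-\frac1m$. It therefore suffices to show that the two error terms together are at most $\frac{1}{2m}$.

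The main obstacle is this last summation. A naive harmonic bound $\varepsilon_j\lesssim (\log m)/j$, multiplied by $|a_j^\ast|=1/m$ and summed over $j$, gives $(\log^2 m)/m$, which exceeds the available slack $1/(2m)$. To close the gap I would exploit that $k_{lj}\gtrsim\max(l,j)$ for indices in the same half, so that $\sum_l \frac{1}{|l-j|\max(l,j)}\lesssim (\log j)/j$. I would also try to use the signs: because the $\delta_l$ enter $\log(a_j/a_j^\ast)$ linearly at first order, one can sum $\sum_j |a_j^\ast|\,\varepsilon_j$ as a double sum over pairs $(l,j)$ and symmetrize. The pair terms $\eta_{lj}$ and $\eta_{jl}$ have the same magnitude but enter rows $j$ and $l$ with weights $|a_j^\ast|=|a_l^\ast|=1/m$, and they partially cancel. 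Alternatively, one can rewrite $\sum_j a_j g(t_j)$ for suitable polynomial $g$ via the quadrature identity $\sum_j a_jT_m(t_j)=1$ and bound only the defect $\sum_j a_j\big(T_m(t_j)-(-1)^j\big)$, which is second order in $\delta$ because $T_m'(c_j)=0$ at the interior extrema. I expect this second-order route to be the one that makes the constant $4$ in $n\ge 4m^2$ suffice: $|T_m(t_j)-(-1)^j|\le \frac12 m^2(1-t_j^2)^{-1}\delta_j^2$-type bounds are summable.
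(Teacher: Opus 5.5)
\textbf{Gap: the node-by-node route cannot close.} After your reduction to $\sum_{j\ge1}|a_j^\ast|(e^{\varepsilon_j}-1)+a_0^\ast(1-e^{-\varepsilon_0})\le\frac{1}{2m}$, you bound each $|a_j|$ separately using only $|\delta_l|\le\frac1n$. No such bound can beat $\sup_\delta|a_j|$. For $1\le j\le m/2$, take $\delta_j=0$ and $\delta_l=-\frac1n\operatorname{sgn}(c_l-c_j)$ for $0<l<m$. This gives $\sup_\delta\log(|a_j|/|a_j^\ast|)\asymp\frac{m^2}{n}\cdot\frac{\log j}{j}$. Summed against $|a_j^\ast|=\frac1m$, this is of order $\frac{\log^2 m}{m}$, which exceeds the slack $\frac{1}{2m}$ once $m$ is large. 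The refinement $k_{lj}\gtrsim\max(l,j)$ only replaces $\log m$ by $\log j$ and does not change this. The symmetrization you suggest does not help either: $\eta_{lj}=\frac{\delta_l-\delta_j}{c_l-c_j}=\eta_{jl}$, so the two members of a pair have the same sign and add. The first-order cancellation in $\sum_j|a_j|$ does exist, but it is global. To first order the change is $-\sum_l\delta_l\sum_{j\ne l}\frac{|a_j^\ast|+|a_l^\ast|}{c_l-c_j}$, and the inner sum vanishes for every interior $l$; this is $T_m'(c_l)=0$ in disguise. (Minor: by \eqref{eq: aj lag} the denominator is $\prod_{l\ne j}(t_j-t_l)$; your $\prod_{l\ne j}(t_l-t_j)$ carries an extra factor $(-1)^m$.)

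\textbf{The fix is your last alternative, which is exactly the paper's proof.} It should be the main line, not a fallback.
\begin{itemize}
\item \emph{Exact identity.} For any distinct nodes, $\sum_j a_jf(t_j)$ equals $2^{1-m}$ times the leading coefficient of $f\in\bbP_m$, so $\sum_j a_jT_m(t_j)=1$. Together with $\operatorname{sgn}a_j=(-1)^j=T_m(c_j)$, this gives $\sum_j|a_j|=1-\sum_{j=1}^{m-1}a_j\bigl(T_m(t_j)-T_m(c_j)\bigr)$.
\item \emph{Crude weight bound.} The per-node comparison is now needed only crudely. Since $k_{lj}\ge|l-j|$, your $\varepsilon_j\le\frac{\pi^2}{9}$, so $|a_j|\le C/m$ with an absolute constant $C$ (the paper uses $C=e^{\pi^2/6}$).
\item \emph{Second-order defect.} Let $\theta_j=\arccos t_j$ and $\theta_j^\ast=j\pi/m$. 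The bound $\sin^2\theta_j^\ast\ge 4/m^2\ge 16/n$ gives $|\theta_j-\theta_j^\ast|\le\frac{\sqrt2}{n\sin\theta_j^\ast}$. Hence $|T_m(t_j)-T_m(c_j)|=1-\cos(m(\theta_j-\theta_j^\ast))\le\frac{m^4}{4n^2\min(j,m-j)^2}$, and the total defect is at most $\frac{C\pi^2m^3}{12n^2}\le\frac{C\pi^2}{192m}<\frac{3}{10m}$.
\item \emph{Direct bound on $a_0$.} Keep $a_0$ out of the perturbative sum. Since $\delta_0=0$, only $|\delta_l|\le\frac1n$ enters, not $\frac2n$. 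With $1-c_l\ge 2l^2/m^2$ and $1+x\le e^x$, you get $a_0\ge\frac{1}{2m}e^{-\pi^2m^2/(12n)}>\frac{2}{5m}$.
\item \emph{Conclusion.} $\sum_j|a_j|-2a_0\le 1+\frac{3}{10m}-\frac{4}{5m}=1-\frac{1}{2m}$.
\end{itemize}
The margin is thin, so this one-sided gain matters. Your stated $\varepsilon_0\lesssim\frac13\sum_l l^{-2}$ gives only $2a_0\gtrsim 0.58/m$, and with that the bookkeeping above does not close.
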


\begin{proof}
We first estimate $a_j$ from the shifted nodes $\{t_l\}_{l=0}^m$ in~\eqref{eq: tj}. By Lemma~\ref{lem: sep}, $\{t_l\}_{l=0}^m$ are distinct, and~\eqref{eq: aj lag} gives 
\begin{equation*}
\begin{aligned}
a_j &= 2^{1-m} \left( \prod_{l=0, l\neq j} \left(\frac{2s_l}{n} - \frac{2s_j}{n}\right) \right)^{-1} = 2^{1-m} \left( \prod_{l=0, l\neq j} \left(t_j - t_l\right) \right)^{-1} \\
&=2^{1-m} \left( \prod_{l=0, l\neq j} \left(c_j - c_l\right) \right)^{-1} \left(\prod_{l=0, l\neq j} \frac{t_j - t_l}{c_j - c_l}\right)^{-1}\\
&= a_j^{\ast} \left( \prod_{l=0, l\neq j} \frac{t_j - t_l}{c_j - c_l} \right)^{-1},
\end{aligned}
\end{equation*}
where $a_j^{\ast}$ is the weight in~\eqref{eq: aj}. Because $\{t_j\}_{j=0}^m$ and $\{c_j\}_{j=0}^m$ are both strictly decreasing sequences, ${\rm sgn}(a_j) = {\rm sgn}(a_j^{\ast}) = (-1)^j$. Since $j\neq l$, $|j-l|\le j+l\le 2m-|j-l|$ and $1\le |j-l|\le m$, we have
\begin{equation}\label{eq: c-c}
\begin{aligned}
     \left|\frac{t_j - t_l}{c_j - c_l}\right| &= \left|1 + \frac{(t_j - c_j) - (t_l - c_l)}{{c_j - c_l}}\right| \ge 1 - \frac{2}{n|c_j - c_l|} \\&= 1 - \frac{1}{n\sin(\frac{(j+l)\pi}{2m})\sin(\frac{|j-l|\pi}{2m})}\\
     &\ge 1 - \frac{1}{n \left( \frac{2}{\pi} \cdot \frac{|j-l|\pi}{2m} \right) \left(\frac{2}{\pi} \frac{|j-l|\pi}{2m}\right)} = 1 - \frac{m^2}{n|j-l|^2}.
\end{aligned}
\end{equation}
Since $n\ge 4m^2$, $\frac{m^2}{n|j-l|^2} \le \frac{1}{4}$, using $1 - x \ge e^{-2x}$ for $0\le x\le \frac{1}{4}$, we obtain
\begin{equation*}
     \left|\frac{t_j - t_l}{c_j - c_l}\right| \ge \exp\left(- \frac{2m^2}{n|j-l|^2}\right).
\end{equation*}
Therefore,
\begin{equation}\label{eq: aj up}
    |a_j|\le |a_j^{\ast}| \prod_{l=0, l\neq j} \exp\left(\frac{2m^2}{n |j-l|^2}\right) \le |a_j^{\ast}| \exp\left(\frac{2m^2}{n}\frac{\pi^2}{3}\right),
\end{equation}
where we have used $\sum_{l\ge 1} l^{-2} = \pi^2/6$ counted in two directions. For $a_0$, it is positive,  we use $c_0 = t_0=1$ and follow a similar derivation in~\eqref{eq: c-c}, 
\begin{equation*}
\begin{aligned}
    a_0 = a_0^{\ast} \left(\prod_{l\ge 1}^m \frac{1 - t_l}{1 - c_l} \right)^{-1} &\ge a_0^{\ast} \prod_{l\ge 1}^m\left(1 + \frac{1}{n(1 - c_l)}\right)^{-1} \ge a_0^{\ast} \prod_{l\ge 1}^m \left(1 + \frac{m^2}{2 n l^2}\right)^{-1}\\
    &\ge a_0^{\ast} \exp\left(-\frac{\pi^2}{12}\frac{m^2}{n}\right) = \frac{1}{2m} \exp\left(-\frac{\pi^2}{12}\frac{m^2}{n}\right) \ge \frac{1}{2m}\exp\left(-\frac{\pi^2}{48}\right) > \frac{2}{5m}, 
\end{aligned}
\end{equation*}
where we use $1 + x \le e^x$ for $x\ge 0$. Now, we estimate $\sum_{j=0}^m |a_j|$. Apply $(a_j, s_j)_{j=0}^m$ to Chebyshev polynomial $T_m(1 - 2s/n)$ and note that $T_m(c_j) = (-1)^m$,
\begin{equation*}
\begin{aligned}
    1 &= \sum_{j=0}^m a_j T_m(1 - 2 s_j/n) =  \sum_{j=0}^m a_j T_m(t_j) \\
    &=\sum_{j=0}^m a_j (T_m(t_j) - (-1)^m) + \sum_{j=0}^m a_j (-1)^m \\
    &= \sum_{j=0}^m a_j (T_m(t_j) - (-1)^m) + \sum_{j=0}^m |a_j|.
\end{aligned}
\end{equation*}
The first term can be bounded by
\begin{equation}\label{eq: abs}
\begin{aligned}
    \left| \sum_{j=0}^m a_j (T_m(t_j) - (-1)^m) \right| &\le \sum_{j=0}^m |a_j| |T_m(t_j) - T_m(c_j) |.
\end{aligned}
\end{equation}
The difference $T_m(t_j) - T_m(c_j) = 0$ for $j =0, m$. For $0 < j < m$, let $\theta_j^{\ast} = \arccos(c_j) = j\pi/m$ and $\theta_j = \arccos(t_j)$, then $\sin^2\theta_j^{\ast} \ge \sin^2(\pi/m) \ge \frac{4}{m^2} > \frac{4}{n}$. By $\arccos'(t) = -\frac{1}{\sqrt{1-t^2}}$, we have 
\begin{equation*}
\begin{aligned}
|\theta_j^{\ast} - \theta_j|\le \frac{1}{n}\sup_{u \text{ between } c_i, t_j} \frac{1}{\sqrt{1-u^2}} &\le \frac{1}{n}\sup_{u \text{ between } c_i, t_j}  \frac{1}{\sqrt{1-c_j^2 - |c_j^2 - u^2|}}  \\
&\le \frac{1}{n}\sup_{u \text{ between } c_i, t_j}  \frac{1}{\sqrt{\sin^2(\theta^{\ast}_j) - \frac{2}{n}}}\\
&\le \frac{1}{n} \frac{\sqrt{2}}{\sin\theta_j^{\ast}}.
\end{aligned}
\end{equation*}
Then $m|\theta_j^{\ast} - \theta_j| \le \frac{m\sqrt{2}}{n \sin\theta_j^{\ast}} \le \frac{m^2\sqrt{2}}{2n} \le \frac{\sqrt{2}}{8} < \frac{\pi}{2}$, we have 
\begin{equation*}
\begin{aligned}
|T_m(t_j) - T_m(c_j) | &= 1 - \cos(m(\theta_j^{\ast} - \theta_j)) \\&\le \frac{m^2|\theta_j^{\ast} - \theta_j|^2}{2}\le \frac{m^2}{n^2 \sin^2\theta_j^{\ast}} \le \frac{m^4}{4 n^2} \cdot \frac{1}{\min(j^2, (m-j)^2) }.
\end{aligned}
\end{equation*}
Plugging into the right-hand side of~\eqref{eq: abs}, and apply~\eqref{eq: aj up} and~\eqref{eq: aj},
\begin{equation*}
\begin{aligned}
    \sum_{j=1}^{m-1} |a_j| |T_m(t_j) - T_m(c_j) | &\le  \frac{1}{m} \exp\left(\frac{2m^2}{n}\frac{\pi^2}{3}\right) \frac{m^4}{4 n^2} \sum_{j=1}^{m-1}  \frac{1}{\min(j^2, (m-j)^2) } \\
    &\le \exp\left(\frac{\pi^2}{6}\right) \frac{\pi^2}{12} \frac{m^3}{n^2} \le \exp\left(\frac{\pi^2}{6}\right) \frac{\pi^2}{192} \frac{1}{m} \\&< \frac{3}{10 m}.
\end{aligned}
\end{equation*}
Combine the above estimates, 
\begin{equation*}
    \sum_{j=0}^m |a_j^{\ast}| - 2 a_0^{\ast} \le 1 + \frac{3}{10m} - \frac{4}{5m}= 1 - \frac{1}{2m}.
\end{equation*}
\end{proof}

\section{Proof on upper bound}\label{sec: high}
Using the same measure $\mu_{m,n}$ in~\eqref{eq: mu cons}, the following is an identity
\begin{equation}\label{eq: W id}
    \bW_{m,n} = \int_{\{0, 1\}^n} T_m(\mathbf{I}-2\bB(\bz)) d\mu_{m,n}(\bz).
\end{equation}
To prove $\mathbf{I} \succeq \bW_{m,n}$, we need to decompose $\mathbf{I} - \bW_{m,n}$ into PSD parts. This is essentially equivalent to the Positivstellensatz argument. The measure $\mu_{m,n}(\bz)$ makes this more explicit, 
\begin{equation}\label{eq: W id2}
      \bI - \bW_{m,n} = \bI - \sum_{j=0}^m a_j \int_{\{0, 1\}^n }T_m\left(\mathbf{I} - 2 \bB(\bz) \right) d\nu_{s_j}(\bz).
\end{equation}
Let us emphasize how this differs from the lower bound.
In the previous proof, we could use the positivity of $\mu_{m,n}(\{\bzero\})$ to relax the inequality with perturbative estimates because the inequality itself is not sharp. More importantly, we also do not require PSD for each summand, and a crude bound $\|T_m\left(\mathbf{I} - 2 \bB(\bz) \right)\|_{\rm op}\le 1$ suffices. Here, running the same technique requires stronger arguments: 
\begin{enumerate}
    \item proving $\|\mu_{n,m}\|_{\rm TV}=1$, which is only possible with Chebyshev extrema, and $s_j$ \textit{may not} be integers. 
    \item finding a PSD decomposition of~\eqref{eq: W id2} on the right-hand side.
\end{enumerate}
Next, we address these two issues.
\subsection{Decomposition}
A key observation is that~\eqref{eq: W id2} actually does \textit{not} require $s_j$ to be integers. The sacrifice is that non-integer $s$ does not yield a non-negative vertex measure $\nu_{s}$, but we can understand it as analytic interpolation. Nonnegativity on integers does not imply non-negativity elsewhere. The Runge phenomenon shows that near the endpoints, the non-negativity can be fragile. Thus, the difficulty lies when $s_j$ is close to $0$ and $n$. Repeating the calculations in~\eqref{eq: chby} and~\eqref{eq: fb} without the integer assumption, we have the following extension for non-integer $\{s_j\}_{j=0}^m$.
\begin{lemma}
    Suppose there are $m+1$ distinct real numbers $\{ s_j\}_{j=0}^m$ that
    \begin{equation}\label{eq: aj con2}
    \sum_{j=0}^m a_j \frac{(s_j)_r}{(n)_r} = \begin{cases}
        0,\quad &0\le r < m,\\
        (-1)^{m} \frac{n^m (n-m)!}{2^{2m-1} n!},\quad &r=m.
    \end{cases}
    \end{equation}
    Then~\eqref{eq: W id2} holds, with $\nu_s$ defined as a linear mapping from $\{0, 1\}^n$ to $\bbR$:
    \begin{equation}\nonumber
    \int_{\{0, 1\}^n} z_{j_1}\cdots z_{j_r} d\nu_s(\bz) = \frac{(s)_r}{(n)_r} .
\end{equation}
\end{lemma}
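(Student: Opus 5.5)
The plan is to observe that the identity \eqref{eq: W id2} is purely algebraic and linear in the functional $\nu_s$. The integrality of $s_j$ enters nowhere except in giving $\nu_s$ a probabilistic meaning. So I would define $\nu_s$ for real $s$ as a linear functional on square-free multilinear polynomials in $\bz$ with matrix coefficients. It acts on a monomial $z_{j_1}\cdots z_{j_r}$ with distinct indices by returning $(s)_r/(n)_r$, and it is extended linearly, entrywise on matrix coefficients. For integer $s\in\{0,\dots,n\}$ this agrees with the vertex measure by the counting identity $\binom{n-r}{s-r}/\binom{n}{s}=(s)_r/(n)_r$ established earlier, so the extension is consistent.

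First, I would expand $T_m(\bI-2\bB(\bz))$ as a matrix polynomial in $\bB(\bz)=\sum_i z_i\bE_i$ exactly as in \eqref{eq: chby}. Powers of $\bB(\bz)$ produce words $z_{i_1}\cdots z_{i_p}\bE_{i_1}\cdots\bE_{i_p}$. After square-free reduction $z_i^2=z_i$, which is the convention used to define the functional, each word becomes $\prod_{i\in S} z_i$ times the same noncommutative matrix product, where $S$ is the set of distinct indices. Grouping by the set of distinct indices of size $r\le m$ gives coefficients $\bC_r(S)$ independent of $s$, and
\begin{equation*}
T_m(\bI-2\bB(\bz))=\sum_{r=0}^m\sum_{|S|=r}\Big(\prod_{i\in S}z_i\Big)\bC_r(S).
\end{equation*}
The key feature is that the top-degree part, with $r=m$, arises only from words with $m$ distinct indices. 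It equals $(-4)^m2^{-1}\sum_{j_1,\dots,j_m\text{ distinct}}\bE_{j_1}\cdots\bE_{j_m}$, using the leading coefficient $2^{m-1}$ of $T_m$ times $(-2)^m$.

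Next, I would apply the functional $\sum_j a_j\nu_{s_j}$ term by term. This gives $\sum_{r}\big(\sum_j a_j (s_j)_r/(n)_r\big)\sum_{|S|=r}\bC_r(S)$. By \eqref{eq: aj con2}, every $r<m$ term vanishes. The $r=m$ term equals $(-1)^m\frac{n^m(n-m)!}{2^{2m-1}n!}\cdot(-1)^m2^{2m-1}\sum_{\text{distinct}}\bE_{j_1}\cdots\bE_{j_m}$. Since $\bE_i=\bA_i/n$, this is exactly $\frac{(n-m)!}{n!}\sum_{\text{distinct}}\bA_{j_1}\cdots\bA_{j_m}=\bW_{m,n}$ by definition \eqref{eq: W}. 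Subtracting from $\bI$ yields \eqref{eq: W id2}. Distinctness of the $s_j$ is needed only so that the system \eqref{eq: aj con2} is uniquely solvable, via the Vandermonde-type argument of Section~\ref{sec: inter}; it plays no role in the identity itself.

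The main obstacle, though mild, is bookkeeping. I need to verify carefully that square-free reduction combined with the linear functional is well-defined. I also need to check that the constant of the leading coefficient matches the normalization in \eqref{eq: aj con2}, including the sign $(-1)^m$ and the $2^{2m-1}$ factor. Once the expansion is organized by index support $S$ with $s$-independent matrix coefficients, the rest is linearity.
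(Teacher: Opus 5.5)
Your proposal is correct and is essentially the paper's argument. The paper's proof consists of redoing the expansion \eqref{eq: chby} and the termwise integration \eqref{eq: fb} with $(s)_r/(n)_r$ in place of the vertex-measure moments; the conditions for $r<m$ then kill the lower-degree terms, and the $r=m$ condition normalizes the top term to $\bW_{m,n}$. Your constant bookkeeping, with top coefficient $(-1)^m2^{2m-1}$ coming from $T_m(1-2t)$, is the correct one and matches the $2^{1-2m}$ normalization in the moment conditions. The paper's remark $c_m=(-1)^m$ after \eqref{eq: chby} is a slip that this factor silently compensates for.
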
 
Then we can choose the same quadrature rule for Chebyshev extrema as in Section~\ref{sec: inter}:
\begin{equation}\nonumber
\begin{aligned}
s_j = \frac{n}{2} \left(1 - \cos\left(\frac{j\pi}{m}\right)\right) = n\sin^2 \frac{j\pi}{2m},\qquad a_j = (-1)^j\eps_j,\qquad\eps_j = \begin{cases}
    \frac{1}{2m},\qquad j=0, m,\\
    \frac{1}{m},\qquad 1\le j\le m-1.
\end{cases}
\end{aligned}
\end{equation}
Then using $\sum_{j=0}^m \eps_j = 1$, we take a parity decomposition:
\begin{equation*}
\begin{aligned}
    \mathbf{I} - \mathbf{W}_{n,m} &= \sum_{j=0}^m \eps_j \left(\mathbf{I} - (-1)^j\int_{\{0, 1\}^n }T_m\left(\mathbf{I} - 2 \bB(\bz) \right) d\nu_{s_j}(\bz)\right) \\
    &= \sum_{j=0,\; j\text{ even}}^m \eps_j \int_{\{0, 1\}^n } \left(\mathbf{I} - T_m\left(\mathbf{I} - 2 \bB(\bz) \right)\right) d\nu_{s_j}(\bz) \\
    &\quad + \sum_{j=0,\; j\text{ odd}}^m \eps_j \int_{\{0, 1\}^n } \left(\mathbf{I} + T_m\left(\mathbf{I} - 2 \bB(\bz) \right)\right) d\nu_{s_j}(\bz) .
\end{aligned}
\end{equation*}
For any fixed $\bz\in\{0,1\}^n$, both $\mathbf{I} \pm T_m\left(\mathbf{I} - 2 \bB(\bz)\right)$ are PSD, and each permits an explicit decomposition into a sum of squares using the other kinds of Chebyshev polynomials.
\begin{lemma}\label{lem: ss}
Let $\bX = \mathbf{I} - 2\bB(\bz)$. 
There exists matrix-coefficient polynomials $\bF_{+}(\bz), \bF_{-}(\bz)$ of degree $\le \lceil \frac{m}{2} \rceil$ such that 
\begin{equation*}
\begin{aligned}
    \mathbf{I} + T_{m}(\bX) &= \bF_{m,+}(\bz) \bF_{m,+}(\bz)^{\top},\\\mathbf{I} - T_{m}(\bX) &= \bF_{m,-}(\bz) \bF_{m,-}(\bz)^{\top}.
\end{aligned}
\end{equation*}
\end{lemma}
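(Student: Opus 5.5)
We use the classical scalar identities expressing $1\pm T_m(x)$ as a square times a nonnegative factor, via the substitution $x=\cos\theta$ and half-angle formulas. Writing $x=\bX$, we have $1-x=2\bB(\bz)$ and $1+x = 2(\bI-\bB(\bz))$, and both are PSD on $\{0,1\}^n$ because $0\preceq\bB(\bz)\preceq\bM\preceq\bI$. All polynomials in $\bX$ commute, so matrix identities follow from scalar identities by the functional calculus. Symmetric square roots of $\bB(\bz)$ are not polynomial in $\bz$, however. So we realize $\bB(\bz)$ and $\bI-\bB(\bz)$ as Gram products of polynomial factors: $\bB(\bz)=\sum_i z_i \bE_i = \bG(\bz)\bG(\bz)^\top$ with $\bG(\bz) = [\,z_1\bE_1^{1/2},\dots,z_n\bE_n^{1/2}\,]$, using $z_i^2=z_i$ on the cube. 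Similarly $\bI-\bB(\bz) = \bH(\bz)\bH(\bz)^\top$ with $\bH(\bz)=[\,(\bI-\bM)^{1/2},\ (1-z_1)\bE_1^{1/2},\dots,(1-z_n)\bE_n^{1/2}\,]$, using $(1-z_i)^2=1-z_i$. Both factors have degree $1$ in $\bz$.

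The scalar identities come from $1+\cos(m\theta)=2\cos^2(m\theta/2)$ and $1-\cos(m\theta)=2\sin^2(m\theta/2)$, split by the parity of $m$.

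For $m=2k$, we get $1+T_m(x)=2T_k(x)^2$ and $1-T_m(x)=2(1-x^2)U_{k-1}(x)^2$.

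For $m=2k+1$, write $\cos(m\theta/2)=\cos(\theta/2)\,V_k(x)$ and $\sin(m\theta/2)=\sin(\theta/2)\,W_k(x)$, where $V_k,W_k$ are the Chebyshev polynomials of the third and fourth kinds, of degree $k$. This gives $1+T_m(x)=(1+x)V_k(x)^2$ and $1-T_m(x)=(1-x)W_k(x)^2$.

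We then assemble the factors, using the commutation of $\bB$ with polynomials in $\bX$ to move factors around. For odd $m$, set $\bF_{m,+}=\sqrt2\,V_k(\bX)\bH(\bz)$, so that
\[
\bF_{m,+}\bF_{m,+}^\top = 2V_k(\bX)(\bI-\bB)V_k(\bX) = (\bI+\bX)V_k(\bX)^2 .
\]
Likewise $\bF_{m,-}=\sqrt2\,W_k(\bX)\bG(\bz)$. Both have degree $k+1=\lceil m/2\rceil$ in $\bz$.

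For even $m$, take $\bF_{m,+}=\sqrt2\,T_k(\bX)$, of degree $k$. Since $1-x^2=(1-x)(1+x)=4\bB(\bI-\bB)$, we need a polynomial Gram factor of $\bB(\bI-\bB)$. Because $\bB$ and $\bI-\bB$ commute, we can write $\bB(\bI-\bB)=\bG\bG^\top-\bB^2$. It is cleaner, though, to use $U_{k-1}(x)\sin\theta=\sin(k\theta)$, which gives $1-T_{2k}=2\sin^2(k\theta)$, and then factor through odd-degree pieces:
\[
\sin(k\theta)=\sin(\theta/2)\,[\,\cos(\theta/2)U_{k-1}(x)\,].
\]
Here $\cos(\theta/2)U_{k-1}$ is a Chebyshev-type product, and $(1-x)(1+x)U_{k-1}^2$ can be realized as $\bF\bF^\top$ with
\[
\bF = 2\,U_{k-1}(\bX)\,\bG(\bz)\,(\text{block-diag})\,\ldots
\]
Concretely, we form $\bF_{m,-} = 2U_{k-1}(\bX)\,[\,\bG\otimes\text{row}\ \bH\,]$, using the block product $\bG\bG^\top\bH\bH^\top$. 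Commuting $\bB$ and $\bI-\bB$ gives $\bB^{1/2}$-free factorization as the Kronecker-style matrix $[\,z_i\bE_i^{1/2}\bH_\ell\,]_{i,\ell}$, using $\bB(\bI-\bB)=\bG\bG^\top\bH\bH^\top$. This is symmetric because the two factors commute, and its Gram structure holds since
\[
\bG\bG^\top\bH\bH^\top = \bG\bG^\top (\bI-\bB)^{1/2}(\bI-\bB)^{1/2}.
\]
The degree is $(k-1)+2=k+1$.

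\textbf{Main obstacle.} The even-$m$ minus case is the main obstacle. The factor $1-x^2$ is a product of two commuting PSD matrices, and the naive Gram factorization $\bG\bG^\top\bH\bH^\top$ is not of the form $\bF\bF^\top$ without a square root. The route we try first is to put $\bI-\bB$ in the middle: since it commutes with $U_{k-1}(\bX)$ and $\bB$,
\[
\bB(\bI-\bB)U_{k-1}^2 = U_{k-1}\bG\,[\,\bG^\top(\bI-\bB)\bG\,]\,\bG^\top U_{k-1}.
\]
This does not commute cleanly. As a fallback, we use $\bB(\bI-\bB)=\bB-\bB^2 = \bG(\bI-\bG^\top\bG)\bG^\top$, where $\bI-\bG^\top\bG\succeq0$ has a polynomial Gram factor because $\bG^\top\bG$ is block $[\,z_iz_j\bE_i^{1/2}\bE_j^{1/2}\,]$. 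This last step is where the degree bound may degrade to $\lceil m/2\rceil+1$, and it needs care.
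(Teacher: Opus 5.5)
Your odd-$m$ case and the even-$m$ ``plus'' case are correct. They are exactly the paper's construction: $\sqrt2\,T_k(\bX)$, and $\sqrt2\,V_k(\bX)$ or $\sqrt2\,W_k(\bX)$ times the degree-one Gram factors of $\bI-\bB$ and $\bB$.

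The gap is the even-$m$ ``minus'' case, which you never close.
\begin{itemize}
\item Your Kronecker-type factor $[\,z_i\bE_i^{1/2}\bH_\ell\,]_{i,\ell}$ has Gram matrix $\sum_i z_i\bE_i^{1/2}(\bI-\bB)\bE_i^{1/2}$. This is not $\bB(\bI-\bB)$, because $\bE_i^{1/2}$ does not commute with $\bB$. The justification you give goes through $(\bI-\bB)^{1/2}$, which is not polynomial in $\bz$.
\item Even if that factor worked, you count its degree as $k+1$. The statement demands $\lceil m/2\rceil=k$ for $m=2k$. This off-by-one matters later: in the proof of Theorem~\ref{thm: both}, reducing $P_k$ modulo $\omega^m-\eta$ disturbs only coefficients invisible to $\|\cdot\|_{\rm coeff,2}$ precisely because $\deg P_k=2d\le m+1$, where $d$ is the degree bound from this lemma.
\item Your fallback identity $\bB-\bB^2=\mathbf{G}(\bI-\mathbf{G}^\top\mathbf{G})\mathbf{G}^\top$ is the right one. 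However, you leave a Gram factor of the $\bz$-dependent middle matrix unconstructed, and you concede the degree may grow.
\end{itemize}

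The missing observation is that idempotence of the $z_i$ lets you replace that middle matrix by a constant one. Write $\mathbf{G}(\bz)=\bR\bZ$ with the constant block row $\bR=(\bE_1^{1/2},\dots,\bE_n^{1/2})$ and $\bZ=\mathrm{diag}(z_i\bI)_{i=1}^n$. Then $\bZ^2=\bZ$ on the cube, so $\bZ(\bI_n-\bZ\bR^\top\bR\bZ)\bZ=\bZ(\bI_n-\bR^\top\bR)\bZ$. Equivalently,
\[
\bB-\bB^2=\bR\bZ\bR^\top-\bR\bZ\bR^\top\bR\bZ\bR^\top=\bR\bZ\,(\bI_n-\bR^\top\bR)\,\bZ\bR^\top .
\]
Since $\bR\bR^\top=\bM\preceq\bI$, the constant matrix $\bI_n-\bR^\top\bR$ is PSD and has a constant square root $\bQ$. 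Hence $\bI-T_{2k}(\bX)=8U_{k-1}(\bX)(\bB-\bB^2)U_{k-1}(\bX)=\bF_{2k,-}\bF_{2k,-}^\top$ with $\bF_{2k,-}=2\sqrt2\,U_{k-1}(\bX)\bR\bZ\bQ$. This factor has degree $(k-1)+1=k$, as required. No Gram factor of $\bI-\bB$ is needed here at all; this is precisely the paper's argument.
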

\begin{proof}
Let $U_r$, $V_r$, and $W_r$ denote the degree-$r$ Chebyshev polynomials of the second, third, and fourth kind, respectively. Then
\begin{equation*}
    U_r(\cos\theta) = \frac{\sin((r+1)\theta)}{\sin\theta}, \qquad V_r(\cos\theta) = \frac{\cos (r+1/2)\theta}{\cos\theta/2},\qquad W_r(\cos\theta) = \frac{\sin(r+1/2)\theta}{\sin\theta/2}.
\end{equation*}
If $m=2r$, we have the following identities
\begin{equation*}
\begin{aligned}
    1 + \cos(2r\theta) &=2\cos^2(r\theta),\\
    1 - \cos(2r\theta) &= 8\left( U_{r-1}(\cos\theta) \right)^2 \left( \left(\frac{1 - \cos\theta}{2}\right) - \left(\frac{1 - \cos\theta}{2}\right)^2 \right), 
\end{aligned}
\end{equation*}
Use $\bB = \bB(\bz) = \frac{\bI -\bX}{2}$, they imply
\begin{equation*}
\begin{aligned}
    \mathbf{I} + T_{2r}(\bX) &= 2 T_r(\bX)^2 \Longrightarrow \bF_{2r, +} = \sqrt{2} T_r(\bX). \\
    \mathbf{I} - T_{2r}(\bX) &= 8U_{r-1}(\bX) \left(\bB - \bB^2\right) U_{r-1}(\bX). 
\end{aligned}
\end{equation*}
Next, we find the square root of $\bB - \bB^2$.
Let $\bR = (\sqrt{\bE_1},  \dots, \sqrt{\bE_n})$, $\bZ = \bZ(\bz) = {\rm diag}(z_i \mathbf{I})_{i=1}^n$ with $z_i\in\{0, 1\}$, then $\bZ^2 = \bZ$, and 
\begin{equation*}
    \bB = \bR \bZ \bZ \bR^{\top},\qquad \bB^2 = \bR \bZ \bR^{\top} \bR \bZ \bR^{\top}.
\end{equation*}
Therefore, let $\mathbf{I}_n$ denote the identity matrix in dimension $n\dim(\bX)$, we have  
\begin{equation*}
    \bB - \bB^2 = \bR\bZ \bQ \bQ^{\top} \bZ \bR^{\top},\qquad \bQ = \left(\mathbf{I}_n - \bR^{\top} \bR\right)^{1/2}  .
\end{equation*}
Thus, $\bF_{2r, -} = 2\sqrt{2} U_{r-1}(\bX) \bR\bZ \bQ$. 
In both cases, the degrees of $\bz$ are at most $r$.
If $m=2r+1$, 
\begin{equation*}
\begin{aligned}
    1 + \cos((2r+1)\theta) &=  V_r^2(\cos\theta)(1 + \cos\theta) \\
    1 - \cos((2r+1)\theta) &= W_r^2(\cos\theta) (1 - \cos\theta)
\end{aligned}
\end{equation*}
Therefore, using $z_i^2 = z_i$ and $(1-z_i)^2 = 1-z_i$, 
\begin{equation*}
\begin{aligned}
     \mathbf{I} - T_{2r+1}(\bX) &= 2 \sum_{i=1}^n W_r(\bX) z_i \sqrt{\bE_i} \left(W_r(\bX) z_i \sqrt{\bE_i} z_i\right)^{\top} \\
      \mathbf{I} + T_{2r+1}(\bX) &= 2\sum_{i=1}^n V_r(\bX) (1-z_i) \sqrt{\bE_i} \left(V_r(\bX) (1-z_i) \sqrt{\bE_i} \right)^{\top} + V_r(\bX) \bH \bH^{\top} V_r(\bX),
\end{aligned}
\end{equation*}
where $\bH\bH^{\top} = \bI - \sum_{i=1}^n \bE_i$. Therefore, 
\begin{equation*}
\begin{aligned}
      \bF_{2r+1, +} &= (\sqrt{2} W_r(\bX) z_1 \sqrt{\bE_1}, \dots, \sqrt{2} W_r(\bX) z_n \sqrt{\bE_n} ), \\
      \bF_{2r+1, -} &= (\sqrt{2} V_r(\bX) (1-z_1) \sqrt{\bE_1}, \dots, \sqrt{2}V_r(\bX) (1-z_n) \sqrt{\bE_n},  V_r(\bX) \bH), 
\end{aligned}
\end{equation*}
All components have degrees in $\bz$ that are at most $r+1$.
\end{proof}

\subsection{Non-negativity on squares}
Take any fixed nonzero vector $\bv$, by Lemma~\ref{lem: ss}, 
\begin{equation}\label{eq: control}
\begin{aligned}
    \aver{\bv, \bI - \bW_{n, m}\bv} &=
     \sum_{j=0,\; j\text{ odd}}^m \eps_j \int_{\{0, 1\}^n } \|\bF_{m, +}^{\top}(\bz) \bv\|^2 d\nu_{s_j}(\bz) \\
    &\quad +\sum_{j=0,\; j\text{ even}}^m \eps_j \int_{\{0, 1\}^n } \| \bF_{m, -}^{\top}(\bz)\bv\|^2 d\nu_{s_j}(\bz) .
\end{aligned}
\end{equation}
Since each entry of $\bF_{m, \pm}^{\top}(\bz) \bv$ is a polynomial of $\bz$ with degree no more than $\lceil \frac{m}{2} \rceil$, the above can be formulated as 
\begin{equation}\label{eq: sum f}
\begin{aligned}
    \aver{\bv, \bI - \bW_{n, m}\bv} &= \sum_{j=0,\; j \text{ odd}}^m  \eps_j \sum_{\alpha}\int_{\{0,1\}^n} |f_{\alpha}(\bz)|^2 d\nu_{s_j}(\bz),\\
    &\quad + \sum_{j=0,\; j \text{ even}}^m  \eps_j \sum_{\beta}\int_{\{0,1\}^n} |f_{\beta}(\bz)|^2 d\nu_{s_j}(\bz),
\end{aligned}
\end{equation}
where $f_{\alpha}, f_{\beta}\in\bbP_{\lceil m/2\rceil} $. To show that $ \bI - \bW_{n, m} $ is PSD, it suffices to show that the measure $\nu_{s_j}$ maps polynomial squares to non-negative values for all $0\le j\le m$. The following theorem is equivalent to~\cite[Lemma 1.4]{grigoriev2001complexity}.
\begin{theorem}\label{thm: pos}
    Let $n\ge 2d$, and $s$ be any real number $d-1 \le s \le n-(d-1)$. Then, for any of degree-$d$ polynomial $f(\bz)$, we have
    \begin{equation*}
        \int_{\{0, 1\}^n} |f(\bz)|^2 d\nu_s \ge 0.
    \end{equation*}
\end{theorem}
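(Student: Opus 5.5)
We will prove Theorem~\ref{thm: pos} through the harmonic (symmetric-group) decomposition of functions on the Boolean cube, following Grigoriev's argument. The plan has four steps.

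\emph{Step 1: reduce to the slice.} The functional $f\mapsto \int f\,d\nu_s$ is defined on square-free polynomials. For integer $s$ it is averaging over the slice $\{|\bz|=s\}$, and for real $s$ it is the analytic continuation in $s$. On the slice we have $\sum_i z_i = s$. So we first check that the formal identity $\int (\sum_i z_i - s) g\, d\nu_s = 0$ holds for every $g$ of degree $\le 2d-1$. On a monomial $g = z_J$ with $|J|=r$, both sides are polynomials in $s$: the left side equals $(n-r)\frac{(s)_{r+1}}{(n)_{r+1}} + r\frac{(s)_r}{(n)_r} - s\frac{(s)_r}{(n)_r}$, which vanishes identically. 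This lets us work modulo the ideal generated by $z_i^2 - z_i$ and $\sum z_i - s$, as in the Johnson scheme.

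\emph{Step 2: decompose $f$ into harmonic parts.} Using Step 1 and the standard decomposition, we write $f \equiv \sum_{k=0}^{d} f_k$, where each $f_k$ is a homogeneous degree-$k$ harmonic multilinear polynomial, meaning it is annihilated by $\sum_i \partial_{z_i}$. The coefficients may depend polynomially on $s$. Because $n\ge 2d$, this decomposition exists and is unique, and the space of such $f_k$ carries the irreducible $\mathrm{Sym}(n)$-representation $S^{(n-k,k)}$. By Schur's lemma, the bilinear form $(g,h)\mapsto\int gh\,d\nu_s$ on these spaces is $\mathrm{Sym}(n)$-invariant, so it is block diagonal: $\int f_k f_l\, d\nu_s = 0$ for $k\ne l$, and on each block it is a scalar multiple $\lambda_k(s)$ of a fixed positive definite form, for instance the coefficient inner product. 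Hence
\[
\int |f|^2 d\nu_s = \sum_{k=0}^{d} \lambda_k(s)\, \|f_k\|^2,
\]
and it suffices to show $\lambda_k(s)\ge 0$ for $0\le k\le d$ whenever $d-1\le s\le n-d+1$.

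\emph{Step 3: compute $\lambda_k(s)$.} We evaluate the form on a single test element, namely the product $h=\prod_{i=1}^k (z_{2i-1}-z_{2i})$, which is harmonic of degree $k$. Expanding $h^2$ with $z^2=z$ gives $h^2=\prod_{i=1}^k (z_{2i-1}+z_{2i}-2z_{2i-1}z_{2i})$. Integrating against $\nu_s$ term by term, using $\int z_J\,d\nu_s = (s)_{|J|}/(n)_{|J|}$, yields a polynomial in $s$. We expect it to factor as
\[
\lambda_k(s) \propto \frac{2^k (s)_k (n-s)_k}{(n)_{2k}},
\]
which is also the classical eigenvalue formula for the Johnson scheme, so it can be double-checked against that. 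Given this product form, $(s)_k=s(s-1)\cdots(s-k+1)\ge 0$ whenever $s\ge k-1$, and $(n-s)_k\ge0$ whenever $n-s\ge k-1$. Since $k\le d$, both hold in the stated range, and the denominator $(n)_{2k}$ is positive because $n\ge 2d\ge 2k$.

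\emph{Main obstacle.} The hard part is Steps 2 and 3 for non-integer $s$. We must justify that the orthogonality and the scalar-per-block structure persist off the integers, and we must get the exact product formula for $\lambda_k$. To handle real $s$, we will argue that every quantity involved is a polynomial in $s$ of degree $\le 2d$. The identities then hold at all integers $s\in[2d, n-2d]$; when that set has fewer than $2d+1$ points, we instead use the block structure formally over $\bbR[s]$, since invariance is an algebraic property. A polynomial identity holding at enough points holds for all real $s$. For the product formula, the plan is to derive it by a recursive contraction: integrate out one pair $(z_{2k-1},z_{2k})$ at a time, which gives the factor $\frac{2(s-k+1)(n-s-k+1)}{(n-2k+2)(n-2k+1)}$ and the recursion for $\lambda_k$.
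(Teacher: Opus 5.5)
Your proof is correct. It shares the paper's skeleton: the identity $\int (Z-s)g\,d\nu_s=0$ for low-degree $g$, the harmonic decomposition built from $\Delta^{\ast}p=(Z-r)p$ on $\cV_r$, and the eigenvalues $\frac{(s)_k(n-s)_k}{(n)_{2k}}$. Where you differ is in how you diagonalize the form on the harmonic pieces.

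The paper does this by hand. Harmonicity gives $\sum_{R\subset S}c_S=0$ for $|R|<j$ (Lemma~\ref{lem: c3}). The kernel $\varphi(|S\cap T|)$ is then expanded in the basis $\binom{|S\cap T|}{l}$, so only the top coefficient survives, and only when the degrees are equal. Lemma~\ref{lem: c4} identifies that coefficient with $\int\prod_{i\in S}z_i\prod_{i\in T}(1-z_i)\,d\nu_s$ for disjoint $S,T$.

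You instead get orthogonality across degrees, and the scalar on each block, from Schur's lemma via $\ker\Delta\cap\cV_k\cong S^{(n-k,k)}$ for $2k\le n$. You then read off the scalar from $h=\prod_i(z_{2i-1}-z_{2i})$. Since $(z_a-z_b)^2=z_a(1-z_b)+z_b(1-z_a)$ on the cube, $h^2$ is a sum of $2^k$ terms of exactly the paper's type. So the two eigenvalue computations coincide, and $\|h\|_{\rm coeff}^2=2^k$ gives $\lambda_k(s)=\frac{(s)_k(n-s)_k}{(n)_{2k}}$ relative to the coefficient inner product. Your route is shorter and explains why the form must be diagonal: the decomposition is multiplicity-free. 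The paper's route is elementary and self-contained, needing neither Young's rule nor absolute irreducibility of Specht modules.

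The obstacle you flag for non-integer $s$ is not real for Steps 2--3. The value $\int z_J\,d\nu_s=(s)_{|J|}/(n)_{|J|}$ depends only on $|J|$ for every real $s$. So the form is $\mathrm{Sym}(n)$-invariant at each fixed $s$, and Schur's lemma applies pointwise in $s$. You can drop the detour through integers in $[2d,n-2d]$. The product formula for $\lambda_k$ also extends from $s\in\{0,\dots,n\}$, because both sides are polynomials in $s$ of degree $\le 2k\le n$.

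Where real $s$ does need care is your phrase ``work modulo the ideal''. For non-integer $s$, $Z-s$ is a nowhere-vanishing function on the cube, so the ideal generated by the $z_i^2-z_i$ and $Z-s$ is the whole ring. You may only discard multiples $(Z-s)g$ of bounded degree. Concretely, you need $f-\sum_k f_k=(Z-s)g$ with $\deg g\le d-1$, so that $|f|^2-|\sum_k f_k|^2$ is $(Z-s)$ times a polynomial of degree $\le 2d-1$, which your Step~1 annihilates. The standard decomposition gives exactly this, so the argument goes through.
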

This theorem shows that the non-negativity interpolates when $s$ is far from the two endpoints $0$ and $n$. The main idea follows from the following Lemma~\ref{lem: har} on harmonic decomposition. See~\cite [Theorem 6.1]{filmus2019harmonicity} for a reference. We include proofs of Theorem~\ref{thm: pos} and Lemma~\ref{lem: har} in Appendix~\ref{sec: pos} for completeness. 
\begin{lemma}\label{lem: har}
    Let $2d\le n$. Then any degree-$d$ polynomial $f(\bz)$ can find another degree-$d$ polynomial $\widetilde{f}(\bz)$ such that 
    \begin{equation*}
        \int_{ \{0, 1\}^n  } |f(\bz)|^2 d\nu_s(\bz) = \int_{ \{0, 1\}^n  } |\widetilde{f}(\bz)|^2 d\nu_s(\bz),
    \end{equation*}
    and $\widetilde{f}$ consists of Boolean harmonic functions
    \begin{equation*}
        \widetilde{f}(\bz) = \sum_{j=0}^d h_j,\qquad h_j = \sum_{|S|=j} c_S \prod_{i\in S} z_i, \qquad \Delta h_j = 0,
    \end{equation*}
    where the operator $\Delta = \sum_{i=1}^n \partial_{z_i}$. The coefficient $c_S$ is a polynomial of $s$ with degree $\le d - |S|$. 
    Furthermore, 
    \begin{equation}\label{eq: coef}
   \int_{ \{0, 1\}^n  } |\widetilde{f}(\bz)|^2 d\nu_s(\bz) = \sum_{j=0}^d \frac{(s)_j (n-s)_j}{(n)_{2j}}\left(\sum_{|S|=j} |c_S|^2 \right),
\end{equation}
The coefficient $\frac{(s)_j (n-s)_j}{(n)_{2j}}$ is non-negative if $d-1\le s \le n-(d-1)$.
\end{lemma}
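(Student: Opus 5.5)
The plan is to treat $\nu_s$ purely algebraically as the linear functional $L_s$ on square-free polynomials determined by $L_s(z_{j_1}\cdots z_{j_r})=(s)_r/(n)_r$. There are three steps: show $L_s$ kills the ideal generated by $\sum_i z_i - s$, use this to replace $f$ by a sum of harmonic pieces, and then prove the orthogonality and norm formula~\eqref{eq: coef} by polynomial continuation from integer $s$.

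\emph{Step 1 (the annihilation identity).} Let $\sigma(\bz)=\sum_{i=1}^n z_i$. For distinct $j_1,\dots,j_r$, reduce $z_{j_1}\cdots z_{j_r}\,\sigma$ to square-free form. This gives $r$ copies of the original monomial plus $n-r$ monomials of degree $r+1$. Hence
\begin{equation*}
L_s\big(z_{j_1}\cdots z_{j_r}\sigma\big) = r\frac{(s)_r}{(n)_r} + (n-r)\frac{(s)_{r+1}}{(n)_{r+1}} = \frac{(s)_r}{(n)_r}\big(r + (s-r)\big) = s\,\frac{(s)_r}{(n)_r}.
\end{equation*}
So $L_s\big((\sigma-s)g\big)=0$ for every square-free $g$, with products always reduced using $z_i^2=z_i$. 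Consequently, if $\widetilde f - f = (\sigma - s)g$, then
\begin{equation*}
\widetilde f^2 - f^2 = (\sigma-s)\,g\,(\widetilde f + f),
\end{equation*}
which still lies in this ideal. Therefore $L_s(\widetilde f^2)=L_s(f^2)$.

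\emph{Step 2 (harmonic reduction).} Let $\cH_j$ be the space of homogeneous degree-$j$ square-free polynomials. For $2j\le n$, I will use the classical decomposition
\begin{equation*}
\cH_j = \ker\Delta|_{\cH_j} \oplus \sigma\cdot\cH_{j-1},
\end{equation*}
with the product taken modulo lower-degree terms. This holds because $\Delta:\cH_j\to\cH_{j-1}$ is onto, being adjoint to multiplication by $\sigma$, which is injective up to degree $n/2$; this is the standard Johnson-scheme (Kneser/inclusion matrix) full-rank fact. Given $f$ of degree $d$, I will peel off the top-degree part as $h_d+\sigma g_{d-1}$ with $h_d$ harmonic. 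Replacing $\sigma g_{d-1}$ by $s\,g_{d-1}$ plus lower-order reduction terms changes $f$ only by an element of the ideal and lowers the degree of the remainder. Iterating down to degree $0$ yields $\widetilde f=\sum_{j\le d} h_j$. Each descent multiplies by at most one factor of $s$, so the coefficients $c_S$ of $h_{|S|}$ are polynomials in $s$ of degree $\le d-|S|$.

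\emph{Step 3 (orthogonality and norms), which I expect to be the main obstacle.} I need, for harmonic $h_j,h_k$,
\begin{equation*}
L_s(h_jh_k)=0 \ (j\ne k), \qquad L_s(h_j^2)=\frac{(s)_j(n-s)_j}{(n)_{2j}}\sum_{|S|=j}c_S^2,
\end{equation*}
for fixed coefficients $c_S$. Both sides are polynomials in $s$ of degree $\le j+k\le 2d\le n$, since $L_s$ of a monomial of degree $r$ is $(s)_r/(n)_r$. It therefore suffices to check the identities at the $n+1$ integers $s=0,\dots,n$, where $\nu_s$ is the genuine uniform measure on the slice. There these are the known orthogonality and norm formulas for harmonic polynomials on the Johnson slice (Filmus, Theorem 6.1). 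I would cite that result, or reprove the norm formula by induction on $j$: expand $h_j^2$, use $\Delta h_j=0$ to rewrite the sums over overlapping sets, and use the recursion $(s)_{r+1}=(s)_r(s-r)$.

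Expanding $L_s(\widetilde f^2)$ with the two identities gives~\eqref{eq: coef}. Finally, $(s)_j(n-s)_j=\prod_{i<j}(s-i)(n-s-i)$ is a product of nonnegative factors whenever $j-1\le s\le n-j+1$. Since $j\le d$, this covers the range $d-1\le s\le n-(d-1)$, and $(n)_{2j}>0$ because $2j\le n$, so every coefficient is nonnegative on that range.
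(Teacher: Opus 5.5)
Your proposal is correct, but Step~3 takes a genuinely different route from the paper. Steps~1--2 coincide with the paper's argument: the first identity of Lemma~\ref{lem: c1}, the splitting $\cV_r=\Delta^{\ast}\cV_{r-1}\oplus(\ker\Delta\cap\cV_r)$, the relation $\Delta^{\ast}p=(Z-r)p$, and the substitution $Z\mapsto s$.

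\textbf{Step 3 in the paper.} The paper works directly at real $s$. It writes $L_s(z_{S\cup T})$ as a function $\varphi(|S\cap T|)$ and expands $\varphi$ in the basis $\binom{|S\cap T|}{l}$, $0\le l\le j$. It then uses the vanishing superset sums $\sum_{S\supseteq R}c_S=0$ for $|R|<j$ (Lemma~\ref{lem: c3}). These kill every term when $j<k$, and every term except the top coefficient $b_j$ when $j=k$. Inclusion--exclusion identifies $b_j$ with $L_s\bigl(\prod_{i\in S}z_i\prod_{i\in T}(1-z_i)\bigr)$ for disjoint $|S|=|T|=j$; this is where $2j\le n$ enters. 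Polynomial continuation is used only for that one product.

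\textbf{Step 3 in your proposal.} You continue the entire identity from the integer slices and then quote the slice theorem. This is legitimate, because both sides are polynomials of degree $\le j+k\le 2d\le n$ in $s$ and they agree at $n+1$ points. Your route is shorter and makes plain that the real-$s$ statement costs nothing beyond the integer one. The price is that it outsources the combinatorial core, which the paper reproves even though it points to the same slice theorem. If you cite it, make sure the statement covers the degenerate slices $s<j$ or $s>n-j$. There both sides vanish, because $h_j\equiv 0$ on those slices: use $h_j(\mathbf 1-\bz)=(-1)^jh_j(\bz)$, which follows from the superset-sum property.

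Two small repairs are needed.

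\emph{Step 1.} The claim for every square-free $g$ fails at degree $n$. Indeed $\sigma z_1\cdots z_n=n\,z_1\cdots z_n$, and your simplification implicitly divides by $(n)_{n+1}=0$. The claim does hold for $\deg g\le n-1$, and that is all you need, since $\deg\bigl(g(\widetilde f+f)\bigr)\le 2d-1\le n-1$.

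\emph{Step 2.} The orthogonal splitting $\cV_j=\ker\Delta\oplus\Delta^{\ast}\cV_{j-1}$ holds without any rank theorem. What your sequential peel-off does need is that $g_{j-1}$ be chosen linearly in its input, because the lower-level inputs already depend on $s$. Uniqueness via injectivity gives this, and it keeps the $c_S$ polynomial in $s$. The paper sidesteps the issue by fully decomposing each $s$-independent component $f_r$ before substituting $Z\mapsto s$.
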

Using the result above, we can prove the following theorem when all non-integer values in $\{s_j\}_{j=0}^m$ satisfy the condition outlined in Theorem~\ref{thm: pos}. 
\begin{theorem}\label{thm: m3}
    Let $d = \lceil \frac{m}{2} \rceil$, and $n \ge 2d$, and 
    \begin{equation*}
        n \sin^2 \frac{\pi}{2m} \ge d- 1.
    \end{equation*}
    Then $\|\bW_{m,n}\|_{\rm op}\le 1$, that is, the AM-GM inequality~\eqref{eq: rr} holds on both bounds.
\end{theorem}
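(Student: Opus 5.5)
Both bounds are handled by the quadrature rule on the exact Chebyshev extrema $s_j = n\sin^2\frac{j\pi}{2m}$ with $a_j=(-1)^j\eps_j$. These $s_j$ need not be integers. They are distinct and satisfy the moment conditions~\eqref{eq: aj con2}, since $\{a_j\}$ is exactly the Lagrange weight computation of Section~\ref{sec: inter} with nodes $c_j$. By the extension lemma, the identity~\eqref{eq: W id2} therefore holds with the linear functionals $\nu_{s_j}$.

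\emph{Upper bound.} Using $\sum_j\eps_j=1$ and the parity decomposition, followed by Lemma~\ref{lem: ss}, we reach~\eqref{eq: control} and hence~\eqref{eq: sum f}. Every $f_\alpha,f_\beta$ there is a polynomial in $\bz$ of degree at most $d=\lceil m/2\rceil$. So $\bI\succeq \bW_{m,n}$ follows once every $\nu_{s_j}$ is nonnegative on squares of degree-$d$ polynomials. For that we apply Theorem~\ref{thm: pos}, which requires $n\ge 2d$ and $d-1\le s_j\le n-(d-1)$. Only $j=1,\dots,m-1$ need checking, because $s_0=0$ and $s_m=n$ are integers and $\nu_0,\nu_n$ are genuine probability measures (point masses). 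For $1\le j\le m-1$, monotonicity of $\sin^2$ on $[0,\pi/2]$ and the symmetry $s_{m-j}=n-s_j$ give
\[
s_j\ \ge\ s_1 = n\sin^2\tfrac{\pi}{2m}\ \ge\ d-1,\qquad s_j\ \le\ s_{m-1}=n-s_1\le n-(d-1).
\]
This is exactly the hypothesis, so every integral in~\eqref{eq: sum f} is nonnegative. Since the $\eps_j>0$, we get $\aver{\bv,(\bI-\bW_{m,n})\bv}\ge0$.

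\emph{Lower bound.} The same argument applies to $\bI+\bW_{m,n}$. Writing
\[
\bI+\bW_{m,n}=\sum_j\eps_j\int\bigl(\bI+(-1)^jT_m(\bX)\bigr)\,d\nu_{s_j},
\]
the roles of even and odd $j$ swap: even $j$ pair with $\bI+T_m$ and odd $j$ with $\bI-T_m$. Lemma~\ref{lem: ss} again writes each integrand as $\bF\bF^\top$ with degree $\le d$, and positivity on squares finishes exactly as before. Together, $-\bI\preceq\bW_{m,n}\preceq\bI$, i.e. $\|\bW_{m,n}\|_{\rm op}\le1$.

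\emph{Main obstacle.} The delicate point is whether Lemma~\ref{lem: ss} really yields $\|\bF^\top\bv\|^2$ as a sum of squares of scalar polynomials in $\bz$ whose degree, after square-free reduction, is at most $d$. Two issues need care. First, the entries of $\bF^\top\bv$ are multilinear polynomials in $\bz$, with matrix entries from $\bE_i,\bR,\bQ,\bH$ entering only as constant coefficients. Second, the identity $\int\|\bF^\top\bv\|^2d\nu_s=\sum_\alpha\int f_\alpha^2\,d\nu_s$ must hold for the extended linear functional. It does, because squaring and then reducing via $z_i^2=z_i$ is exactly how $\nu_s$ acts on monomials. In the odd case $m=2r+1$, the factor $z_i$ or $(1-z_i)$ times $W_r(\bX)$ or $V_r(\bX)$ has degree $\le r+1=d$, so the degree bound holds and Theorem~\ref{thm: pos} applies.
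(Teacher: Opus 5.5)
Your proposal is correct and follows the paper's proof. It uses the exact Chebyshev-extrema nodes $s_j=n\sin^2\frac{j\pi}{2m}$, the parity decomposition via Lemma~\ref{lem: ss}, and Theorem~\ref{thm: pos} on $s_1\le s_j\le n-s_1$ for $1\le j\le m-1$, with $\nu_{s_0},\nu_{s_m}$ genuine point masses. Your explicit parity-swapped decomposition of $\bI+\bW_{m,n}$ is a useful addition: the paper only writes out $\aver{\bv,(\bI-\bW_{m,n})\bv}\ge 0$ and leaves the lower bound implicit.
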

\begin{proof}
     Let $s_j = n\sin^2(j\pi/m)$. We find that $s_0 = 0$, $s_m = n$ are integers, thus $\nu_{s_0}$ and $\nu_{s_m}$ are non-negative automatically. For $1\le j\le m-1$, 
     \begin{equation*}
         s_1=n\sin^2\frac{\pi}{2m} < s_2 < \cdots < s_{m-1} = n\sin^2\frac{(m-1)\pi}{2m} = n - n\sin^2\frac{\pi}{2m}.
     \end{equation*}
     The given condition implies that 
     \begin{equation*}
         d-1\le s_1 < \cdots < s_{m-1} \le n -(d-1).
     \end{equation*}
     By Theorem~\ref{thm: pos}, $\aver{\bv, \bI - \bW_{n, m}\bv} \ge 0$ for every vector $\bv$. Thus $\|\bW_{m,n}\|_{\rm op} \le 1$.
\end{proof}
\begin{corollary}
    Suppose $n\ge m^3/2$ and $n\ge m$. The AM-GM inequality~\eqref{eq: rr} holds on both sides.
\end{corollary}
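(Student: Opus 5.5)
The corollary should follow directly from Theorem~\ref{thm: m3}. With $d=\lceil m/2\rceil$, I need to check two hypotheses: $n\ge 2d$ and $n\sin^2\frac{\pi}{2m}\ge d-1$. The first is immediate, since $2d\le m+1$. If $m\le n-1$ we are done; otherwise $n=m$ and $m$ is odd, and $2d=m+1>n$. So the case $n=m$ odd needs separate care. There, however, $n\ge m^3/2$ forces $m\le \sqrt2$, that is $m=1$, and then $d=1$ and $n\ge 2d=2$ fails only for $n=m=1$. In that case $\bW_{1,1}=\bA_1/1\preceq \bI$ and $\bW_{1,1}\succeq 0$ trivially. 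More generally, for $m\ge 2$ we have $m^3/2\ge m+1\ge 2d$, so $n\ge 2d$ holds automatically.

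For the main condition I use the elementary bound $\sin x\ge \frac{2}{\pi}x$ on $[0,\pi/2]$. This gives $\sin^2\frac{\pi}{2m}\ge \frac{1}{m^2}$, hence $n\sin^2\frac{\pi}{2m}\ge \frac{n}{m^2}\ge \frac{m}{2}$. Since $d-1=\lceil m/2\rceil-1\le \frac{m+1}{2}-1=\frac{m-1}{2}<\frac m2$, the required inequality $n\sin^2\frac{\pi}{2m}\ge d-1$ holds. Theorem~\ref{thm: m3} then gives $\|\bW_{m,n}\|_{\rm op}\le 1$, which is exactly~\eqref{eq: rr} on both sides after multiplying by $\frac{n!}{(n-m)!}$.

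There is no real obstacle here. The only care needed is the bookkeeping of the degenerate small cases ($m=1$, and whether $n\ge 2d$ holds), which the hypothesis $n\ge m$ together with $n\ge m^3/2$ settles as described.
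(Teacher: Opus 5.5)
Your proposal is correct and follows the paper's proof: both apply Theorem~\ref{thm: m3}, using $\sin\frac{\pi}{2m}\ge\frac{1}{m}$ to get $n\sin^2\frac{\pi}{2m}\ge\frac{n}{m^2}\ge\frac{m}{2}\ge d-1$. You also check the hypothesis $n\ge 2d$, which the paper's one-line proof leaves implicit; it holds automatically for $m\ge 2$ because $m^3/2\ge m+1$, and you handle the remaining case $n=m=1$ directly.
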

\begin{proof}
If $n\ge m^3/2$, then $n\sin^2\frac{\pi}{2m} \ge \frac{n}{m^2}\ge \frac{m}{2} \ge d-1$. Then apply the previous theorem.
\end{proof}
This corollary requires $m = \cO(n^{1/3})$ for the AM-GM inequality to hold. The proof essentially constructs the sum of squares (SOS) required by the Positivstellensatz argument. The coefficients in~\eqref{eq: coef} first turn negative at $s=d-2$ and $s=n-d+2$, and their signs alternate as $s$ moves toward the two ends. To strengthen the result to $n = \Omega(m^2)$, an intuitive idea is to carefully control the negative contribution from~\eqref{eq: control} so that the positive contribution can absorb it. However, there is a beautiful way to avoid this.

\subsection{Parity quadrature rules}
By~\eqref{eq: sum f}, we only have to show 
\begin{equation*}
    \begin{aligned}
         \sum_{j=0,\; j \text{ odd}}^m  \eps_j \sum_{\alpha}\int_{\{0,1\}^n} |f_{\alpha}(\bz)|^2 d\nu_{s_j}(\bz) &\ge 0,\\
       \sum_{j=0,\; j \text{ even}}^m \eps_j \sum_{\beta}\int_{\{0,1\}^n} |f_{\beta}(\bz)|^2 d\nu_{s_j}(\bz) &\ge 0.
    \end{aligned}
\end{equation*}
The parity polynomials $f_{\alpha}$ and $f_{\beta}$ are generally different, so we cannot combine them. Using Lemma~\ref{lem: har}, 
each degree-$d$ polynomial $f_{\alpha}$ has another degree-$d$ polynomial $\widetilde{f}_{\alpha}$ (same for $f_{\beta}$): 
\begin{equation*}
\begin{aligned}
  \int_{\{0,1\}^n} |f_{\alpha}(\bz)|^2 d\nu_{s_j}(\bz) &=   \int_{\{0,1\}^n} |\widetilde{f}_{\alpha}(\bz)|^2 d\nu_{s_j}(\bz)\\
  &= \sum_{k=0}^d \frac{(s_j)_k (n-s_j)_k}{(n)_{2k}}\left(\sum_{|S|=k} |c_{\alpha, S}(s_j)|^2 \right),
\end{aligned}
\end{equation*}
where each $c_{\alpha, S}$ is a polynomial in $s$ of degree $\le d-|S|$. Thus, after an affine map $s_j = \frac{n}{2}(1 - t_j)$, $t_j = \cos(\frac{j\pi}{m})$, 
it suffices to show that for any given $0\le k\le d$,
\begin{equation}\label{eq: tar}
    \begin{aligned}
         \sum_{j=0,\; j \text{ odd}}^m  \eps_j \frac{(\frac{n}{2}(1 - t_j))_k (\frac{n}{2}(1 + t_j))_k}{(n)_{2k}} |p(t_j)|^2 &\ge 0,     \qquad \forall p\in\bbP_{d - k}.\\
       \sum_{j=0,\; j \text{ even}}^m \eps_j \frac{(\frac{n}{2}(1 - t_j))_k (\frac{n}{2}(1 + t_j))_k}{(n)_{2k}} |p(t_j)|^2  &\ge 0,    \qquad \forall p\in\bbP_{d - k}.
    \end{aligned}
\end{equation}
When $k=0$, nothing needs to be proven. From now on, we assume $1\le k\le d$ is fixed.

Note that $\sum_{j\text{ odd}}\eps_j = \sum_{j\text{ even}}\eps_j = \frac{1}{2}$, and both summands in~\eqref{eq: tar} are degree-$2d$ in $t$, which motivates us to define the parity quadrature rules for any $f\in \bbP_{2d}$:
\begin{equation*}
    \cQ_{-1}(f) :=  2 \sum_{j=0,\; j \text{ odd}}^m  \eps_j f(t_j) ,\qquad \cQ_{+1}(f) :=  2 \sum_{j=0,\; j \text{ even}}^m  \eps_j f(t_j) .
\end{equation*}
Let $\omega = e^{i\theta}$, and $t = \frac{\omega+\omega^{-1}}{2}$. Any polynomial $f(t)\in\bbP_{2d}$ can be written as its Laurent series
\begin{equation}\label{eq: G}
  G(\omega) =  f\left(\frac{\omega + \omega^{-1}}{2}\right) = \sum_{j=-2d}^{2d} g_{j} \omega^j.
\end{equation}
Thus, we can redefine the two quadrature rules into a unified form for Laurent series for $0 < |\omega| < \infty$,
\begin{equation*}
    \cQ_{\eta}(G) := \frac{1}{m} \sum_{\omega^m = \eta}  G(\omega),           \qquad\eta \in \{-1, 1\} .
\end{equation*}
By the residue theorem, the quadrature rule is a contour integral on the boundary of the annulus $\Gamma$ (positively oriented) containing the unit circle: 
\begin{equation}\label{eq: cont}
     \cQ_{\eta}(G) = \frac{1}{2\pi i} \int_{\Gamma} \frac{G(\omega) \omega^{m-1}}{\omega^m - \eta} d\omega, \qquad \eta \in \{-1, 1\}.
\end{equation}
\begin{lemma}\label{lem: Q eta}
    Let $d = \lceil \frac{m}{2} \rceil$ with $m\ge 2$, then 
    \begin{equation*}
        \cQ_{\eta}(G) = g_0 + \eta (g_m + g_{-m}), \qquad \eta \in \{-1, 1\}.
    \end{equation*}
    where $g_m, g_{-m}$ are the coefficients in~\eqref{eq: G}.
\end{lemma}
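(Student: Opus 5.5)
We prove Lemma~\ref{lem: Q eta} by computing the discrete sum $\cQ_{\eta}$ directly on the monomials $\omega^\ell$, $|\ell|\le 2d$, and using linearity. The contour form~\eqref{eq: cont} is not needed for this.

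\emph{Step 1: the quadrature rules in Laurent form.} First we check that $\cQ_{\pm 1}$, as defined through $t_j=\cos(j\pi/m)$ with weights $2\eps_j$, agree with the unified form $\frac1m\sum_{\omega^m=\eta}G(\omega)$. The $2m$-th roots of unity are $\omega=e^{ij\pi/m}$ for $j=0,\dots,2m-1$.

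\begin{itemize}
\item Those with $j$ even satisfy $\omega^m=1$.
\item Those with $j$ odd satisfy $\omega^m=-1$.
\item The map $\omega\mapsto(\omega+\omega^{-1})/2$ sends $e^{\pm ij\pi/m}$ to $t_j$.
\end{itemize}

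Each interior node $t_j$, $1\le j\le m-1$, is therefore hit by two roots $e^{\pm ij\pi/m}$, giving total weight $\frac{2}{m}=2\eps_j$. The endpoint nodes $j=0$ and $j=m$ come from the single roots $\omega=\pm1$, giving weight $\frac1m=2\eps_j$. So the two definitions coincide, and $G(\omega)=f(t)$ is well defined on each root.

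\emph{Step 2: monomials.} Write $\omega_0=e^{i\pi/m}$ when $\eta=-1$, and $\omega_0=1$ when $\eta=1$. The roots of $\omega^m=\eta$ are $\omega_0\zeta$, where $\zeta$ runs over the $m$-th roots of unity. Hence
\begin{equation*}
\frac1m\sum_{\omega^m=\eta}\omega^\ell=\omega_0^\ell\cdot\mathbf 1[m\mid \ell].
\end{equation*}
When $\ell=km$, the factor $\omega_0^{\ell}$ equals $\eta^k$. Therefore
\begin{equation*}
\cQ_\eta(G)=\sum_{k}\eta^k g_{km},
\end{equation*}
where the sum runs over all integers $k$ with $|km|\le 2d$.

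\emph{Step 3: degree bookkeeping.} This is the only real point to check: we need $|k|\le1$, i.e. $2d<2m$, so that no $g_{\pm 2m}$ term appears.
\begin{itemize}
\item If $m$ is even, $2d=m<2m$.
\item If $m$ is odd with $m\ge3$, $2d=m+1<2m$.
\end{itemize}
In both cases, which together cover $m\ge2$, only $k\in\{-1,0,1\}$ contribute. This gives
\begin{equation*}
\cQ_\eta(G)=g_0+\eta(g_m+g_{-m}).
\end{equation*}

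The main obstacle is just this step: making sure that $|\ell|\le 2d$ rules out aliasing at $\pm 2m$. The hypothesis $m\ge2$ is needed precisely here, since for $m=1$ we would have $2d=2=2m$.
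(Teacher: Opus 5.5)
Your proof is correct, and it takes a more elementary route than the paper's.

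\emph{The paper's route.} It first rewrites $\cQ_\eta$ as the contour integral~\eqref{eq: cont} over the boundary of an annulus around the unit circle. On the outer circle it expands $\omega^{m-1}/(\omega^m-\eta)$ as a geometric series in $\eta\omega^{-m}$, which yields $g_0+\eta g_m$. On the negatively oriented inner circle it expands in $\omega^m/\eta$, which yields $\eta g_{-m}$. The bound $2d<2m$ is used to truncate both series.

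\emph{Your route.} You skip the residue calculus and evaluate the finite sum directly, using the discrete orthogonality of $m$-th roots of unity. This gives the general aliasing formula $\cQ_\eta(G)=\sum_k \eta^k g_{km}$ for any Laurent polynomial. The hypothesis $m\ge 2$, i.e.\ $2d<2m$, then enters in exactly one visible place, and you correctly note that it fails for $m=1$.

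\emph{Comparison.} The underlying mechanism, aliasing of frequencies that are multiples of $m$, is the same in both arguments. Yours avoids the orientation and convergence bookkeeping, and it makes the contour representation~\eqref{eq: cont} unnecessary, since the paper uses it only for this lemma. Your Step 1 also supplies a check that the paper only asserts: the node--weight definition of $\cQ_{\pm1}$, via $t_j$ and $2\eps_j$, coincides with the roots-of-unity form $\frac1m\sum_{\omega^m=\eta}G(\omega)$. The only tacit step in your write-up is $\eta^{-1}=\eta$ when you combine the $k=-1$ term, which is immediate for $\eta\in\{-1,1\}$.
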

\begin{proof}
    Take the inner boundary $\Gamma_{-}$ (negative oriented) and outer boundary $\Gamma_{+}$ (positive oriented) of the annulus. On $\Gamma_{+}$, 
    \begin{equation*}
      \frac{G(\omega) \omega^{m-1}}{\omega^m - \eta} = \frac{1}{\omega} \left( \sum_{l=0}^{\infty} \left( \frac{\eta}{\omega^m} \right)^l \right) \sum_{j=-2d}^{2d} g_j \omega^j
    \end{equation*}
    The contour integral on $\Gamma_{+}$ is nonzero only at $j \in  m\bbN$, since $2d < 2m$ for $m\ge 2$, leaving only $j = 0$ and $j = m$. Thus, 
    \begin{equation*}
        \frac{1}{2\pi i} \int_{\Gamma_{+}}     \frac{G(\omega) \omega^{m-1}}{\omega^m - \eta} d\omega = g_0 + \eta g_m.
    \end{equation*}
    Similarly, on $\Gamma_{-}$, 
        \begin{equation*}
      \frac{G(\omega) \omega^{m-1}}{\omega^m - \eta} = -\frac{1}{\omega} \frac{\omega^{m}}{ \eta} \left( \sum_{l=0}^{\infty} \left( \frac{\omega^m}{\eta} \right)^l \right) \sum_{j=-2d}^{2d} g_j \omega^j.
    \end{equation*}
    The contour integral is nonzero only when $-(m+j)\in m\bbN$, so $j = -m$ is the only choice. Thus,  
    \begin{equation*}
        \frac{1}{2\pi i} \int_{\Gamma_{-}}  \frac{G(\omega) \omega^{m-1}}{\omega^m - \eta} d\omega = \eta g_{-m}.
    \end{equation*}
\end{proof}
Next, we make the target in~\eqref{eq: tar} more explicit. 
\begin{lemma}\label{lem: fall}
The following identity holds
$$\frac{(\frac{n}{2}(1 - t))_k (\frac{n}{2}(1 + t))_k}{(n)_{2k}} =  \frac{\left(\frac{n^2}{4}\right)^k}{(n)_{2k}} \prod_{l=0}^{k-1}  \left( 1 - t^2 - \beta_l \right),\qquad \beta_l:= \frac{4l(n-l)}{n^2},\qquad 0\le k\le d.$$
When $n\ge 2d$, the constant $\frac{\left(\frac{n^2}{4}\right)^k}{(n)_{2k}}$ is non-negative. 
\end{lemma}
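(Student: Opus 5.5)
The identity follows by pairing the factors of the two falling factorials symmetrically, and the sign claim comes from inspecting the factors of $(n)_{2k}$.

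\textbf{Step 1: pair the factors.} Write $x = \frac{n}{2}(1-t)$ and $y = \frac{n}{2}(1+t)$, so that $x+y = n$ and $xy = \frac{n^2}{4}(1-t^2)$. By definition,
\[
(x)_k (y)_k = \prod_{l=0}^{k-1} (x-l)(y-l).
\]
Each factor expands as
\[
(x-l)(y-l) = xy - l(x+y) + l^2 = \frac{n^2}{4}(1-t^2) - l(n-l),
\]
since $-ln + l^2 = -l(n-l)$.

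\textbf{Step 2: factor out the scale.} Pulling $\frac{n^2}{4}$ out of each of the $k$ factors gives
\[
(x-l)(y-l) = \frac{n^2}{4}\Bigl(1-t^2-\frac{4l(n-l)}{n^2}\Bigr) = \frac{n^2}{4}\,(1-t^2-\beta_l).
\]
Taking the product over $l = 0,\dots,k-1$ and dividing by $(n)_{2k}$ yields the stated identity. For $k=0$ both sides equal $1$ by the conventions $(a)_0 = 1$ and the empty product.

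\textbf{Step 3: sign of the constant.} We have $(n)_{2k} = n(n-1)\cdots(n-2k+1)$. Its smallest factor is $n-2k+1$. Since $k \le d$ and $n \ge 2d$, this factor satisfies $n-2k+1 \ge 1 > 0$. Hence every factor of $(n)_{2k}$ is positive, so $(n)_{2k} > 0$, and the constant $\bigl(\frac{n^2}{4}\bigr)^k/(n)_{2k}$ is positive, in particular non-negative.

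There is no real obstacle here; the only step needing care is Step 1, where one must recognise that the symmetric pairing $(x-l)(y-l)$ depends on $t$ only through $1-t^2$.
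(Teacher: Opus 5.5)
Your proposal is correct and follows the same route as the paper: both pair the factors $(x-l)(y-l)$ so that each depends on $t$ only through $1-t^2$, then pull out $\frac{n^2}{4}$ from each factor. Your Step~3 also spells out the positivity of $(n)_{2k}$ via $n-2k+1\ge 1$, which the paper asserts without proof.
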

\begin{proof}
A direct computation gives the identity.
    \begin{equation*}
\begin{aligned}
    \frac{(\frac{n}{2}(1 - t))_k (\frac{n}{2}(1 + t))_k}{(n)_{2k}}  &= \frac{1}{(n)_{2k}}\prod_{l=0}^{k-1} \left(\frac{n}{2}(1 - t) - l\right) \left(\frac{n}{2}(1 + t) - l\right)  \\&= \frac{1}{(n)_{2k}}\prod_{l=0}^{k-1} \frac{n^2}{4} \left( 1 - t^2 - \frac{4l(n-l)}{n^2} \right)\\
    &= \frac{\left(\frac{n^2}{4}\right)^k}{(n)_{2k}}\prod_{l=0}^{k-1}  \left( 1 - t^2 - \beta_l \right),\qquad \beta_l:= \frac{4l(n-l)}{n^2}.
\end{aligned}
\end{equation*}
\end{proof}
Taking $t = \frac{\omega + \omega^{-1}}{2}$ gives $1 - t^2 =  -\frac{(\omega^2 - 1)^2}{4\omega^2}$.  Define
\begin{equation*}
     \Phi_k(\omega) := \prod_{l=0}^{k-1}  \left( u(\omega) - \beta_l \right), \qquad  u(\omega) := -\frac{(\omega^2 - 1)^2}{4\omega^2}.
\end{equation*}
It can be written as 
\begin{equation}\label{eq: phi k}
    \Phi_k(\omega) = u^k + \sum_{l=1}^{k-1} (-1)^l \cS_l u^{k-l},
\end{equation}
The term $\beta_0$ does not appear because $\beta_0 = 0$. The coefficient $\cS_l$ is the symmetric sum of $(\beta_1,\cdots, \beta_{k-1})$ of length $l$:
\begin{equation}\label{eq: sym}
    \cS_l:= \cS_l(\beta_1,\cdots, \beta_{k-1}) = \sum_{\substack{|S|=l\\S\subset \{1,\dots, k-1\}}} \prod_{j\in S} \beta_{j}.
\end{equation}

\subsection{Expansion of target polynomial}
Let $L$ be a Laurent series
\begin{equation*}
    L(\omega) = \sum_{j\in\bbZ} c_j \omega^j, 
\end{equation*}
and we define its paraconjugate by
\begin{equation*}
     L^{\sharp}(\omega) = \sum_{j\in\bbZ} \overline{c_j} \omega^{-j}. 
\end{equation*}
 In particular, 
\begin{equation}\label{eq: ush}
    u(\omega) = \frac{- (\omega^2 - 1)^2}{4\omega^2} = \frac{1}{4} (\omega^2 - 1) (\omega^{-2} - 1) = \frac{1}{4} (\omega^2 - 1) (\omega^{2} - 1)^{\sharp}. 
\end{equation}
Given any $p\in\bbP_{d-k}$, let $$Y(\omega) =  p\left(\frac{\omega + \omega^{-1}}{2}\right) = \sum_{l= -(d-k)}^{d-k} y_l \omega^l, $$
and  $Y^{\sharp}(\omega) = \overline{Y(\omega)}$ on the unit circle.
Therefore, using Lemma~\ref{lem: fall}, the desired~\eqref{eq: tar} is equivalent to proving
\begin{equation*}
    \cQ_{\eta} (  \Phi_{k}(\omega) Y(\omega) Y^{\sharp}(\omega) ) \ge 0.
\end{equation*}
By~\eqref{eq: phi k}, it permits the following expansion
\begin{equation}\label{eq: exp}
\cQ_{\eta} (  \Phi_{k}(\omega) Y(\omega) Y^{\sharp}(\omega) ) = \cH_k(Y) + \sum_{l=1}^{k-1} (-1)^l \cS_l  \cH_{k-l}(Y), 
\end{equation}
where $\cH_{j}(Y) := \cQ_{\eta} (u^{j}(\omega) Y(\omega) Y^{\sharp}(\omega) )$, $1 \le j \le k$. We aim to show that $\cH_k(Y)$ dominates the remaining terms. To find each $\cH_{j}(Y)$, we introduce
\begin{equation}\label{eq: pj}
    P_j(\omega) := (\omega^2 - 1)^j \omega^{d-k} Y(\omega).
\end{equation}
Then $P_j$ is a polynomial, and $  u^j Y Y^{\sharp} = 4^{-j} P_j  P_j^{\sharp} $.

Define the coefficient norm for polynomials by 
\begin{equation*}
    \|q\|_{\rm coeff}^2 := \sum_{k=0}^{\deg(q)} |c_k|^2, \qquad\text{if}\quad q(\omega) = \sum_{k=0}^{\deg(q)} c_k \omega^k.
\end{equation*}
This norm induces a Hilbert space $\mathbb{H}_{\rm coeff}$ with inner product
\begin{equation*}
    \aver{p, q}_{\rm coeff} = \frac{1}{2\pi} \int_{0}^{2\pi} p^{\sharp}(e^{i\theta}) q(e^{i\theta}) d\theta.
\end{equation*}
In particular, the multiplier $(1-\omega)^{\sharp}$ is the adjoint operator to $(1-\omega)$ on $\mathbb{H}_{\rm coeff}$.  The following lemma characterizes $\cH_j$ in the coefficient norm.
\begin{lemma}\label{lem: rem}
Let $r_j\in\bbP_{m-1}$ be the remainder of $P_j$ divided by $\omega^m - \eta$, 
\begin{equation*}
 r_j = P_j \mod (\omega^m - \eta).
\end{equation*}
Then
\begin{equation*}
    \cH_{j}(Y) = 4^{-j} \|r_j\|^2_{\rm coeff}.
\end{equation*}
\end{lemma}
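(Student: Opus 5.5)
The plan is to bypass the contour-integral formula of Lemma~\ref{lem: Q eta} and work directly with the node form of the quadrature, $\cQ_{\eta}(G) = \frac{1}{m}\sum_{\omega^m=\eta} G(\omega)$. The point is that every node satisfies $|\omega|=1$, so on the nodes paraconjugation is ordinary complex conjugation. The factorization $u^jYY^{\sharp}=4^{-j}P_jP_j^{\sharp}$ then turns $\cH_j(Y)$ into an average of $|P_j|^2$. After reducing modulo $\omega^m-\eta$, this average becomes a discrete Parseval identity on the $m$ nodes.

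\textbf{Step 1 (factorization).} I will verify $u^j Y Y^{\sharp} = 4^{-j} P_j P_j^{\sharp}$. Paraconjugation is multiplicative, so
\[
P_j^{\sharp}(\omega) = (\omega^{-2}-1)^j\,\omega^{-(d-k)}\,Y^{\sharp}(\omega).
\]
Multiplying by $P_j$ cancels the monomial factors $\omega^{\pm(d-k)}$. The remaining factor is $(\omega^2-1)^j(\omega^{-2}-1)^j=(4u)^j$ by~\eqref{eq: ush}. Since $Y$ is a Laurent polynomial with powers between $-(d-k)$ and $d-k$, the product $P_j$ is a genuine polynomial.

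\textbf{Step 2 (reduction to the remainder).} Fix a node $\omega$ with $\omega^m=\eta$. Since $|\omega|=1$, we have $\omega^{-1}=\overline{\omega}$, and hence $L^{\sharp}(\omega)=\overline{L(\omega)}$ for any Laurent polynomial $L$. Therefore
\[
\cH_j(Y)=\frac{4^{-j}}{m}\sum_{\omega^m=\eta}|P_j(\omega)|^2 .
\]
Write $P_j = q_j\,(\omega^m-\eta) + r_j$. Then $P_j(\omega)=r_j(\omega)$ at every node, so $P_j$ may be replaced by $r_j$ in the sum.

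\textbf{Step 3 (discrete orthogonality).} Write $r_j(\omega)=\sum_{k=0}^{m-1}c_k\omega^k$. Fix one root $\omega_0$ of $\omega^m=\eta$; the other nodes are $\omega_0\zeta$ with $\zeta^m=1$. Expanding,
\[
\frac1m\sum_{\omega^m=\eta}|r_j(\omega)|^2=\sum_{k,l=0}^{m-1}c_k\overline{c_l}\,\omega_0^{k-l}\,\frac1m\sum_{\zeta^m=1}\zeta^{k-l}.
\]
Since $|k-l|<m$, the inner average equals $\delta_{kl}$. The sum therefore collapses to $\sum_k|c_k|^2=\|r_j\|_{\rm coeff}^2$, which gives $\cH_j(Y)=4^{-j}\|r_j\|^2_{\rm coeff}$.

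The only delicate point is Step 3. The identity requires $\deg r_j\le m-1$ so that no aliasing occurs, and this is exactly why one reduces modulo $\omega^m-\eta$ before applying Parseval. I also need to check that the node form of $\cQ_\eta$ agrees with the earlier definition through $t_j=\cos(j\pi/m)$. This holds because the roots of $\omega^m=\pm1$ come in conjugate pairs $e^{\pm i j\pi/m}$, with $j$ odd for $\eta=-1$ and even for $\eta=+1$. The real nodes $\omega=\pm1$ occur only once each, which matches the half weights $\eps_0$ and $\eps_m$.
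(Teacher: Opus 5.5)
Your proof is correct and follows the same route as the paper. You factor $u^jYY^{\sharp}=4^{-j}P_jP_j^{\sharp}$, replace $P_j$ by $r_j$ at the nodes $\omega^m=\eta$, and use $\deg r_j\le m-1$ to identify the quadrature value with $[\omega^0]\,r_jr_j^{\sharp}=\|r_j\|_{\rm coeff}^2$.

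The only difference is how you justify the last step. You use the discrete orthogonality $\frac1m\sum_{\zeta^m=1}\zeta^{k-l}=\delta_{kl}$ for $|k-l|<m$ instead of the residue computation of Lemma~\ref{lem: Q eta}. This is more elementary, and it also makes explicit something the paper leaves implicit: $P_j^{\sharp}=\overline{P_j}=\overline{r_j}=r_j^{\sharp}$ on the unit-circle nodes.
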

\begin{proof}
   From the above analysis, we have 
    $\cH_{j}(Y) = 4^{-j} \cQ_{\eta} (P_j  P_j^{\sharp} )$. Since $\omega^m - \eta = 0$ on each quadrature node, $P_j = r_j$ on these nodes. Therefore, $ \cQ_{\eta} (P_j  P_j^{\sharp} ) =  \cQ_{\eta} (r_j  r_j^{\sharp} )$. Because $\deg(r_j)\le m-1$, the exponents in $r_j r_j^{\sharp}$ range from $-(m-1)$ to $(m-1)$. By Lemma~\ref{lem: Q eta}, 
    \begin{equation*}
         \cQ_{\eta} (r_j  r_j^{\sharp} ) = [\omega^0] r_j  r_j^{\sharp} = \|r_j\|_{\rm coeff}^2. 
    \end{equation*}
\end{proof}
We emphasize that $\deg(P_j) = 2d + 2j - 2k$.
If $1\le j\le k - 1$, this degree is less than $m$, so the modulo is not needed, $\|r_j\|_{\rm coeff}^2 = \|P_j\|_{\rm coeff}^2$. The modulo is needed only when $j=k$.

\subsection{Iterative estimates}
In this part, we estimate $\cH_{j}(Y)$. We decompose the polynomials into their even- and odd-degree terms.
\begin{equation*}
    P_{j}(\omega) = A_j(\omega^2) + \omega B_j(\omega^2),\qquad \deg(A_j) = d-k+j,\qquad \deg(B_j) = d-k+j-1.
\end{equation*}
Then by the definition~\eqref{eq: pj}, $P_{j+1}(\omega) = P_{j}(\omega) (\omega^2 - 1)$, we have the iteration
\begin{equation}\label{eq: iter}
    A_{j+1}(u) = A_j (u)(u-1),\qquad B_{j+1}(u) = B_j (u)( u - 1), \qquad u = \omega^2,
\end{equation}
and coefficient norm $\|P_j\|^2_{\rm coeff} = \|A_j\|^2_{\rm coeff} + \|B_j\|^2_{\rm coeff}$. 
\begin{lemma}\label{lem: iter est}
   Let $1\le j \le L-1$. Suppose that 
    \begin{equation*}
\begin{aligned}
    &\varphi \in V:= \operatorname{span}\{ (u-1)^j, (u-1)^{j+1}, \cdots, (u-1)^{L-1}\}.
\end{aligned}
\end{equation*}
Then 
\begin{equation*}
    j(j+1) \|\varphi\|_{\rm coeff}^2 \le \frac{L^2}{4} \|(u-1)\varphi\|_{\rm coeff, 1}^2,
\end{equation*}
where $\|\cdot\|_{\rm coeff, \tau}$ is a semi-norm by removing the first and last $\tau$ terms:
\begin{equation*}
    \|q\|_{\rm coeff, \tau}^2 := \sum_{k=\tau}^{\deg(q)-\tau} |c_k|^2, \qquad\text{if}\quad q(\omega) = \sum_{k=0}^{\deg(q)} c_k \omega^k.
\end{equation*}
\end{lemma}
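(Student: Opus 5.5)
The plan is to recognise the inequality as a discrete Poincaré inequality on the coefficient vector of $\varphi$. I would then prove it with the Sturm--Liouville structure of the discrete Chebyshev (Hahn) polynomials. The weight $x(N-x)$ appearing there is bounded by $N^2/4$, and this produces the constant $L^2/4$.

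\emph{Step 1: reduction to exact degree.} Write $\varphi(u)=\sum_{x=0}^{N-1}\phi_x u^x$ with $\phi_{N-1}\neq 0$, so $N\le L$. Since $\varphi\in V$ is nonzero and divisible by $(u-1)^j$, we have $N-1\ge j$. Set $q=(u-1)\varphi$, whose degree is exactly $N$, with coefficients $c_x=\phi_{x-1}-\phi_x$ (using the convention $\phi_{-1}=\phi_N=0$). Removing the first and last coefficient then gives
\begin{equation*}
\|(u-1)\varphi\|_{\rm coeff,1}^2=\sum_{x=1}^{N-1}|\phi_x-\phi_{x-1}|^2 .
\end{equation*}
This is exactly the Dirichlet form of the path graph on $\{0,\dots,N-1\}$, which has no boundary terms. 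Because $N^2\le L^2$, it suffices to prove the claim with $L$ replaced by $N$.

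\emph{Step 2: translate divisibility into orthogonality.} The polynomial $\varphi$ is divisible by $(u-1)^j$ if and only if $\varphi^{(i)}(1)=0$ for all $i<j$. Equivalently, $\sum_x (x)_i\,\phi_x=0$ for $i<j$, that is, $\sum_x x^i\phi_x=0$ for $i<j$. So, viewed as a function on the grid $\{0,\dots,N-1\}$, $\phi$ is orthogonal in $\ell^2$ to every polynomial of degree $<j$. The target therefore becomes
\begin{equation*}
j(j+1)\sum_{x=0}^{N-1}|\phi_x|^2\le \frac{N^2}{4}\sum_{x=1}^{N-1}|\phi_x-\phi_{x-1}|^2
\qquad\text{whenever } \phi\perp \bbP_{j-1}.
\end{equation*}

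\emph{Step 3: weighted spectral bound.} Let $t_0,\dots,t_{N-1}$ be the discrete Chebyshev polynomials, orthonormal for the uniform weight on $\{0,\dots,N-1\}$. They are the Hahn polynomials with $\alpha=\beta=0$, and they satisfy the self-adjoint difference equation
\begin{equation*}
-\nabla\big[\sigma(x)\,\Delta t_i(x)\big]=i(i+1)\,t_i(x),
\end{equation*}
where $\sigma(x)$ is the grid weight attached to the edge $\{x-1,x\}$. I would take $\sigma(x)=x(N-x)$; this vanishes at the end edges, which is exactly why no boundary terms appear. I will have to check the precise normalization (and the exact eigenvalue $i(i+1)$ for this grid length) against the standard Hahn formulas, and this bookkeeping is the main technical obstacle.

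Granting the equation, summation by parts gives the weighted Dirichlet form
\begin{equation*}
\sum_{x=1}^{N-1}\sigma(x)|\phi_x-\phi_{x-1}|^2=\sum_i i(i+1)|\langle\phi,t_i\rangle|^2 .
\end{equation*}
For $\phi\perp\operatorname{span}\{t_0,\dots,t_{j-1}\}=\bbP_{j-1}$, only indices $i\ge j$ contribute, so the right-hand side is at least $j(j+1)\|\phi\|^2$.

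\emph{Step 4: remove the weight.} Finally, $\sigma(x)=x(N-x)\le N^2/4$, which yields the unweighted inequality of Step 2 and hence the lemma. If the exact Hahn normalization proves awkward, a fallback is to verify the weighted-form identity directly. One shows that $T:\phi\mapsto -\nabla[\sigma\Delta\phi]$ is symmetric, maps $\bbP_i$ into $\bbP_i$ with leading coefficient multiplied by $i(i+1)$, and is therefore diagonalized by the Gram--Schmidt basis of monomials. That argument needs only the leading-term computation $T x^i=i(i+1)x^i+\text{lower}$ (up to the normalization of $\sigma$) and symmetry, both of which are routine.
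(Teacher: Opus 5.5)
Your argument is correct and is essentially the paper's proof. The paper's operator $\cH=(u-1)^\sharp\cD(L-\cD)(u-1)$, written on coefficient sequences, is exactly your weighted path Laplacian with edge weight $x(L-x)$. The paper's triangularity on the basis $(u-1)^l$ of $V$ is the orthogonal-complement form of your triangularity on the flag $\bbP_0\subset\bbP_1\subset\cdots$ in the index variable, so both give the spectrum $\{l(l+1)\}$ and then use $x(L-x)\le L^2/4$. Your first reduction to the exact degree $N\le L$ is a small bonus: the paper's weight suppresses only the coefficients of $u^0$ and $u^L$, while yours makes the inequality hold with the semi-norm taken relative to the true degree of $(u-1)\varphi$.
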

\begin{proof}
Let $\cD = u \frac{d}{du}$, then it is a self-adjoint operator on $\mathbb{H}_{\rm coeff}$ since $\cD u^l = l u^l$. We consider the operator $$\cH p :=  (u -1)^{\sharp}\cD (L - \cD) (u -1) p, $$ it satisfies
\begin{equation}\label{eq: tri}
\begin{aligned}
    \cH (u-1)^{l} &= (u^{-1}-1) \left( (L-1) u (l+1) (u-1)^{l} - (l+1)l u^2 (u-1)^{l-1} \right) \\
    &= -(l+1)(u-1)^{l} ((L-l-1) (u-1) - l)\\
    &= l(l+1)(u-1)^{l} - (l+1)(L-l-1) (u-1)^{l+1},
\end{aligned}
\end{equation} 
when $l = L-1$, the last term vanishes. Thus, $\cH$ is self-adjoint on $\mathbb{H}_{\rm coeff}$ and preserves $V_j$. The calculation~\eqref{eq: tri} indicates the representation under $V_j$ is triangular. The eigenvalues restricted on $V_j$ are the diagonal values $l(l+1)$, $j\le l \le L-1$. Therefore,
\begin{equation*}
    \aver{\varphi, \cH \varphi}_{\rm coeff} \ge j(j+1) \|\varphi\|_{\rm coeff}^2.  
\end{equation*}
Moreover, 
\begin{equation*}
\begin{aligned}
    \aver{\varphi, \cH \varphi}_{\rm coeff}  &= \aver{\varphi, (u -1)^{\sharp}\cD (L- \cD) (u -1)  \varphi}_{\rm coeff} \\
    &= \aver{(u -1) \varphi, \cD (L- \cD) (u -1)  \varphi}_{\rm coeff}.
\end{aligned}
\end{equation*}
The eigenvalues of $ \cD (L - \cD)$ are $l(L - l)\le \frac{L^2}{4}$ for $0\le l\le L$ with eigenvectors as the monomial $\{ u^l \}_{l=0}^L$. In particular, the monomials $u^0$ and $u^L$ are suppressed completely. Therefore, 
\begin{equation*}
j(j+1) \|\varphi\|_{\rm coeff}^2 \le \frac{L^2}{4} \|(u-1)\varphi\|_{\rm coeff, 1}^2 .
\end{equation*}
\end{proof}
Apply Lemma~\ref{lem: iter est} to both $A_j$ and $B_j$,
\begin{equation*}
    \begin{aligned}
A_j &\in \operatorname{span}\{ (u-1)^j, (u-1)^{j+1}, \cdots, (u-1)^{d-k+j}\}, \\
B_j &\in \operatorname{span}\{ (u-1)^j, (u-1)^{j+1}, \cdots, (u-1)^{d-k+j-1}\}.  
\end{aligned}
\end{equation*}
Then,
\begin{equation*}
 \|A_j\|_{\rm coeff}^2 \le \frac{(d-k+j+1)}{4 j(j+1)}  \|A_{j+1}\|_{\rm coeff, 1}^2, \qquad 
\|B_j\|_{\rm coeff}^2 \le \frac{(d-k+j)}{4 j(j+1) }  \|B_{j+1}\|_{\rm coeff, 1}^2. 
\end{equation*}
Combining them gives
\begin{equation}\label{eq: next}
4^{-j} \|P_j\|_{\rm coeff}^2 \le \frac{(d-k+j+1)^2}{j (j+1) }  \cdot 4^{-(j+1)}  \|P_{j+1}\|_{\rm coeff, 2}^2,
\end{equation}
Here $\|\cdot\|_{\rm coeff, 2}$ is used since each parity drops two terms.
Repeating the above analysis iteratively, we obtain for $j=1,\dots, k-1$,
\begin{equation*}
\begin{aligned}
 \cH_{k-j}(Y) = 4^{k-j} \|P_{k-j}\|_{\rm coeff}^2 &\le \prod_{l=k-j}^{k-1} \frac{(d-k+l+1)^2}{l (l+1)}    4^{-k} \|P_{k}\|_{\rm coeff, 2}^2 \\
 &=  \frac{((d)_{j})^2}{(k)_{j}(k-1)_{j}}    4^{-k} \|P_{k}\|_{\rm coeff, 2}^2.
\end{aligned}
\end{equation*}
We still need to find $\cH_k(Y)$. Let  $  P_k(\omega) = \sum_{l=0}^{2d} c_l \omega^l $. By Lemma~\ref{lem: rem}, if $2d = m$,
\begin{equation*}
   r_k =  P_k \mod (\omega^m - \eta) = (c_0 + \eta c_{2d}) + \sum_{l=1}^{2d-1} c_l \omega^l. 
\end{equation*}
If $2d = m+1$, 
\begin{equation*}
    r_k =  P_k \mod (\omega^m - \eta) = (c_0 + \eta c_{2d}) + (c_1 + \eta c_{2d+1})\omega + \sum_{l=2}^{2d-2} c_l \omega^l. 
\end{equation*}
In both cases, the modified terms do not show in the $\|\cdot\|_{\rm coeff, 2}$ norm. Therefore, 
\begin{equation*}
    4^{-k} \|P_{k}\|_{\rm coeff, 2}^2 \le   4^{-k} \|r_k\|_{\rm coeff}^2 = \cH_k(Y).
\end{equation*}
The above analysis gives us key estimates:
\begin{equation}\label{eq: est 1}
\begin{aligned}
 \cH_{k-j}(Y) &\le \frac{((d)_{j})^2}{(k)_{j}(k-1)_{j}}  \cH_k(Y),\qquad 1\le j\le k-1,\\
  \cH_{k-j}(Y) &\le \frac{(d-j+1)^2}{(k-j)(k-j+1)} \cH_{k-j+1}(Y), \qquad 1\le j\le k-1.
\end{aligned}
\end{equation}

\subsection{Proof of Theorem~\ref{thm: both}}
Let us assume $\frac{2 d^2}{n} \le 1$ and $1\le k\le d$. 
Recall the symmetric sum in~\eqref{eq: sym}:
\begin{equation*}
   \cS_l:= \cS_l(\beta_1,\cdots, \beta_{k-1}) = \sum_{\substack{|S|=l\\S\subset \{1,\dots, k-1\}}} \prod_{j\in S} \beta_{j}.
\end{equation*}
It satisfies a simple bound 
\begin{equation*}
       (l+1) \cS_{l+1} = \sum_{|S|=l} \prod_{j\in S} \beta_{j} \left(\sum_{r\notin S} \beta_r \right) \le  \cS_l  \sup_{|S|=l}  \left(\sum_{r\notin S} \beta_r \right). 
\end{equation*}
Since $ n \ge 2d \ge 2k$, $\beta_r = \frac{4r(n-r)}{n^2}\le \frac{4(k-1)(n-k+1) }{n^2}$ for all $1\le r \le k-1$. Then,  
\begin{equation*}
    \sum_{r\notin S} \beta_r \le (k-1-l) \frac{4(k-1)(n-k+1) }{n^2} \le \frac{4(k-1) (k-1-l)}{n}.
\end{equation*}
Thus,
\begin{equation}\label{eq: sym iter}
    \cS_{l+1} \le  \frac{4(k-1) (k-1-l)}{ (l+1) n} \cS_l.
\end{equation}
The first symmetric sum $\cS_1$ can be evaluated analytically,
\begin{equation}\label{eq: s1}
\begin{aligned}
  \cS_1 = \sum_{l=1}^{k-1}\beta_l &= \sum_{l=1}^{k-1} \frac{4 l (n-l)}{n^2} = \frac{2k(k-1)}{n} - \frac{4 }{n^2} \frac{(k-1) k (2k-1)}{6}\\
  &= \frac{2k(k-1) }{n} \left(  1- \frac{2k-1}{3n} \right) .
\end{aligned}
\end{equation}
Now combine the estimates~\eqref{eq: est 1}, ~\eqref{eq: sym iter} into~\eqref{eq: exp}. For $ 1\le l\le k-2$, 
\begin{equation*}
\begin{aligned}
     \cS_{l+1}  \cH_{k-l-1}(Y) &\le \frac{4(k-1) (k-1-l)}{ (l+1) n}  \frac{(d-l)^2}{(k-l-1)(k-l)} \cS_l  \cH_{k-l}(Y) \\
     &= \frac{4(k-1)(d-l)^2}{(l+1) n (k-l)} \cS_l  \cH_{k-l}(Y)\\
     &\le \frac{2 (d-1)^2}{n} \frac{2 (k-1) }{(l+1)(k-l)}\cS_l  \cH_{k-l}(Y)\\
     &\le \frac{2 (d-1)^2}{n} \cS_l  \cH_{k-l}(Y) <  \cS_l  \cH_{k-l}(Y).
\end{aligned}
\end{equation*}
This is because $\frac{2d^2}{n}\le 1$ and $k\ge l+2$ leads to  
\begin{equation*}
\begin{aligned}
    (l+1)(k-l) - 2(k-1) &= (l-1) k - l(l+1) + 2 \\&\ge (l-1) (l+2)  - l(l+1) + 2 = 0.
\end{aligned}
\end{equation*}
Recall~\eqref{eq: exp}, our target $\cQ_{\eta} (  \Phi_{k}(\omega) Y(\omega) Y^{\sharp}(\omega) )$ equals
\begin{equation*}
\begin{aligned}
&\cH_k(Y) + \sum_{l=1}^{k-1} (-1)^l \cS_l  \cH_{k-l}(Y) \\
&\quad= \cH_k(Y) - \cS_1 \cH_{k-1}(Y) + \sum_{l=1}^{\lfloor (k-1)/2 \rfloor }\left( \cS_{2l}  \cH_{k-2l}(Y) -  \cS_{2l+1}  \cH_{k-2l-1}(Y)  \right) + \cR\\
&\quad \ge \cH_k(Y) - \cS_1 \cH_{k-1}(Y).
\end{aligned}
\end{equation*}
where $\cR = \cS_{k-1}  \cH_{1}(Y) \ge 0$ for odd $k$, and $\cR = 0$ for even $k$. Finally, we apply~\eqref{eq: est 1} and~\eqref{eq: s1}. Then
\begin{equation*}
\begin{aligned}
    \cH_k(Y) - \cS_1 \cH_{k-1}(Y) &\ge \cH_k(Y)\left( 1 - \frac{d^2}{k(k-1)} \frac{2k(k-1) }{n} \left(  1- \frac{2k-1}{3n} \right)  \right)\\
    &\ge \cH_k(Y) \left( 1 - \frac{2d^2}{n}  \left(  1- \frac{2k-1}{3n} \right) \right) \ge 0.
\end{aligned}
\end{equation*}
The last inequality holds since $\frac{2d^2}{n} \le 1$ and
\begin{equation*}
    3n  - (2k-1) \ge 6k^2 - 2k + 1 > 0.
\end{equation*}
The proof is complete.

\section{Concluding remarks}\label{sec: con}
This note proves the non-commutative AM-GM inequality for $n\ge 2\lceil m/2 \rceil^2$. This is the regime where sampling without replacement outperforms sampling with replacement. The core idea is to construct a vertex measure $\mu_{m,n} = \sum_{j=0}^{n} a_j \nu_{s_j}$ and integrate it with a Chebyshev polynomial. The main difficulty is that $\nu_{s_j}$ may be negative if $s_j$ is non-integer. The lower bound of~\eqref{eq: rr} is not sharp. Thus, we can sacrifice sharpness for flexibility to perturb $s_j$ to the nearest integers. This relates closely to mock-Chebyshev nodes in interpolation theory when the equispaced grid size $2/n$ must resolve the first Chebyshev gap of degree $m$. The upper bound is sharp. The key step is Lemma~\ref{lem: har}, which lets the surrogate polynomial reduce the problem back to a quadrature problem. 

\section*{Acknowledgements}
YZ was partially funded by NSF grant DMS-2309530.

\appendix
\section{Proof of~\eqref{eq: eig}}\label{sec: p eig}
Let $\bW_{m, n}$ is constructed in Section~\ref{sec: mot}. For $\bA_{i_k}\cdots\bA_{i_1}$ with distinct indices, we let 
\begin{equation*}
    c_k = \aver{\bu, \bA_{i_k}\cdots\bA_{i_1} \bu}.
\end{equation*}
Then $c_0 = c_1 = 1$. The eigenvalue $\lambda = c_m$. Note that $c_k$ does not depend on the choice of distinct indices. Applying the matrices $\bA_{i_j}$ to $\bu$ from right to left, by induction we find that $\bA_{i_l}\cdots\bA_{i_1} \bu$ is in the following form:
\begin{equation*}
    \bA_{i_l}\cdots\bA_{i_1} \bu = c_l \bu + \sum_{i=1}^l g_{i,l} \bz_i,
\end{equation*}
where the constants $g_{i,l}$ are not important individually, since all $\bz_i\perp \bu$ and $\bz_i^{\top} \bz_j = -1/d$ if $i\neq j$. We only need to record $\gamma_l = \sum_{i=1}^l g_{i,l}$. Therefore, after applying $\bA_{i_{l+1}}$ to $\bu$, $\bz_{i_j}$, $j=1,\dots, l$, 
\begin{equation*}
    \bA_{i_{l+1}} \bu = \bu + \beta \bz_{i_{l+1}} ,\qquad \bA_{i_{l+1}} \bz_{i_j} = \alpha \bz_{i_j} - \frac{\beta}{d} \bu - \frac{\beta^2 - \alpha}{d} \bz_{i_{l+1}}. 
\end{equation*}
Thus, 
\begin{equation*}
    \begin{pmatrix}
        c_{l+1}\\ \gamma_{l+1}
    \end{pmatrix} = \begin{pmatrix}
        c_l - \frac{\beta}{d} \gamma_l \\
        \beta c_l + (\alpha - \frac{\beta^2 - a}{d})\gamma_l
    \end{pmatrix} = \begin{pmatrix}
        1  & -\beta/d\\
        \beta & \alpha - (\beta^2 - \alpha)/d
    \end{pmatrix}      \begin{pmatrix}
        c_{l}\\ \gamma_{l}
    \end{pmatrix}.
\end{equation*}
It corresponds to the iteration $c_{k+2} = 2 \alpha c_{k+1} - \frac{\alpha n}{n-1} c_{k}$. For the specific choice~\eqref{eq: choice}, it becomes
\begin{equation}
    c_{k+2} = 2 \left(\frac{n}{n-1} \cos\theta\right) \cos\theta c_{k+1} - \left(\frac{n}{n-1} \cos\theta\right)^2 c_k.
\end{equation}
Thus, solving the iteration gives
\begin{equation*}
    c_{k} = \left(\frac{n}{n-1} \cos\theta\right)^{k-1} \frac{\sin(k\theta) - \left(\frac{n}{n-1} \cos\theta\right)\sin((k-1)\theta) }{\sin\theta}.
\end{equation*}
At $k = m$, $\sin((k-1)\theta) = 0$, and $\sin( k\theta) =(-1)^{\ell} \sin(\theta)$.
\section{Proof of~\eqref{eq: aj}}\label{sec: aj}
Let $U_{k}$ be the degree-$k$ Chebyshev polynomial of the second kind. Then $$U_{m-1}(\cos\theta) = \sin(m\theta)/\sin\theta,$$ the extrema $\cos(j\pi/m)$, $j=1,\cdots, m-1$, are zeros of $U_{m-1} = T_m'$:
\begin{equation*}
    \prod_{j=0}^{m} \left( t - \cos(j\pi/m) \right) \equiv {2^{1-m}}(t^2-1)U_{m-1}(t).
\end{equation*}
The factor $2^{1-m}$ normalizes the leading coefficient. Therefore, if $0\le j\le m-1$ the formula~\eqref{eq: aj} gives
\begin{equation*}
\begin{aligned}
   (a_j^{\ast})^{-1} &=\frac{d}{dt}(t^2-1)U_{m-1}(t)\Big|_{t = \cos(j\pi/m)} \\&=  (t^2-1) U'_{m-1}(t)\Big|_{t = \cos(j\pi/m)}\\
   &= (m T_m(t) - t U_{m-1}(t)) \Big|_{t = \cos(j\pi/m)}\\
   &=  m (-1)^j.
\end{aligned}
\end{equation*}
For $j=0, m$, 
\begin{equation*}
    (a_j^{\ast})^{-1} =  \frac{t^2-1}{t - \cos(j\pi/m)} U_{m-1}(t) \Big|_{t = \cos(j\pi/m)} = \begin{cases}
         2 m , &\quad j = 0,\\
        2 m (-1)^{m}, &\quad j=m.
    \end{cases}
\end{equation*}

\section{Proof of Theorem~\ref{thm: pos} and Lemma~\ref{lem: har}}\label{sec: pos}

\begin{lemma}\label{lem: c1}
    For any $s$, let $Z = \sum_{i=1}^n z_i$, with $z_i\in\{0, 1\}$, then 
    \begin{equation}
        \begin{aligned}
            \int_{\{0, 1\}^n} (Z - s) f(\bz) d\nu_s(\bz) &= 0,\qquad &&\forall f\in\bbP_{n-1}.\\
            \int_{\{0,1\}^n} \prod_{j\in S} z_j \prod_{i\in T}(1-z_i) d\nu_s(\bz) & = \frac{(s)_{|S|} (n-s)_{|T|}}{(n)_{|S|+|T|}},\qquad &&\forall S\cap T =\emptyset. 
        \end{aligned}
    \end{equation}
\end{lemma}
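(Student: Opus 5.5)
Both identities are statements about the linear functional $\nu_s$, which the paper's lemma on non-integer nodes defines only through its values on square-free monomials: $\int z_{j_1}\cdots z_{j_r}\,d\nu_s = (s)_r/(n)_r$ for distinct indices. The plan is to check each identity monomial by monomial. All products are first reduced to square-free form using $z_i^2=z_i$; this is how $\nu_s$ is applied to any polynomial. Both sides are then polynomials in $s$, so no integrality of $s$ is ever needed.

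\textbf{The first identity.} By linearity it suffices to take $f=z_S:=\prod_{j\in S}z_j$ with $|S|=r\le n-1$. Splitting $Z=\sum_i z_i$ according to whether $i\in S$, and reducing $z_i z_S=z_S$ for $i\in S$, gives
\begin{equation*}
(Z-s)z_S=(r-s)\,z_S+\sum_{i\notin S} z_{S\cup\{i\}} .
\end{equation*}
Integrating term by term against $\nu_s$ yields
\begin{equation*}
(r-s)\frac{(s)_r}{(n)_r}+(n-r)\frac{(s)_{r+1}}{(n)_{r+1}} .
\end{equation*}
Now use the two elementary identities $(s)_{r+1}=(s)_r(s-r)$ and $(n)_{r+1}=(n)_r(n-r)$. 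The second term then equals $(s-r)(s)_r/(n)_r$, which cancels the first exactly. The restriction $r\le n-1$ is needed only so that $(n)_{r+1}\neq 0$ and the monomial $z_{S\cup\{i\}}$ makes sense.

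\textbf{The second identity.} Expand $\prod_{i\in T}(1-z_i)=\sum_{U\subset T}(-1)^{|U|}z_U$. Since $S\cap T=\emptyset$, each product $z_S z_U$ is already square-free of degree $|S|+|U|$. Write $a=|S|$ and $b=|T|$. The left side becomes
\begin{equation*}
\sum_{u=0}^{b}(-1)^u\binom{b}{u}\frac{(s)_{a+u}}{(n)_{a+u}} .
\end{equation*}
This is where the real work lies: showing this alternating sum equals $(s)_a(n-s)_b/(n)_{a+b}$ as a polynomial identity in $s$.

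My first attempt is induction on $b$. For $b=0$ the identity is the definition of $\nu_s$. For the inductive step, pick $t\in T$ and write $1-z_t$ as $1$ minus $z_t$. This splits the integral into a $(S, T\setminus\{t\})$ term minus an $(S\cup\{t\}, T\setminus\{t\})$ term. Applying the induction hypothesis to both, the difference is
\begin{equation*}
\frac{(s)_a(n-s)_{b-1}}{(n)_{a+b-1}}-\frac{(s)_{a+1}(n-s)_{b-1}}{(n)_{a+b}}
=\frac{(s)_a(n-s)_{b-1}}{(n)_{a+b}}\bigl[(n-a-b+1)-(s-a)\bigr].
\end{equation*}
The bracket simplifies to $n-s-(b-1)$, and $(n-s)_{b-1}\bigl(n-s-(b-1)\bigr)=(n-s)_b$, which is exactly the claimed right side. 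This induction also keeps the degrees in check, with $a+b\le n$ throughout, so that no denominator $(n)_{a+b}$ vanishes. It sidesteps the combinatorial Vandermonde-type summation entirely, so I expect no genuine obstacle beyond this bookkeeping.
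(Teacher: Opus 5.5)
Your proof is correct. For the first identity you do what the paper does: split $Z z_S$ into the $r$ indices inside $S$ and the $n-r$ outside, integrate, and cancel using $(s)_{r+1}=(s)_r(s-r)$ and $(n)_{r+1}=(n)_r(n-r)$.

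For the second identity you take a different route. The paper argues combinatorially. For integer $s\in\{0,\dots,n\}$ the functional $\nu_s$ is the uniform distribution on $s$-subsets. Both sides then equal the probability $\binom{n-|S|-|T|}{s-|S|}/\binom{n}{s}$ that a random $s$-subset contains $S$ and avoids $T$. Since both sides are polynomials in $s$ of degree at most $|S|+|T|$, agreement at the $n+1$ integers $0,\dots,n$ forces equality for all $s$.

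Your induction on $|T|$ peels off one factor $1-z_t$ and uses $(n)_{a+b}=(n)_{a+b-1}(n-a-b+1)$ and $(n-s)_{b-1}(n-s-b+1)=(n-s)_b$. This is a purely algebraic Pascal-type recurrence, valid verbatim for every real $s$. It needs neither the probabilistic interpretation nor the polynomial-interpolation step.

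The paper's argument is shorter and explains why the answer is the hypergeometric probability. Yours is self-contained and isolates the only constraint actually used: $(n)_{|S|+|T|}\neq 0$, i.e.\ $|S|+|T|\le n$, which is automatic for disjoint subsets of $[n]$.

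This also avoids a small slip in the paper, which asserts the degree bound $|S|+|T|\le n-1$. That bound fails when $|S|+|T|=n$, a case used later with $|S|=|T|=j$ and $2j=n$. The paper's argument still works there, because degree at most $n$ against $n+1$ nodes is enough.
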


\begin{proof}
    Consider a monomial $z_{j_1}\cdots z_{j_r}$, where $j_1, \dots, j_r$ distinct. Then,
    \begin{equation*}
    \begin{aligned}
             \int_{\{0, 1\}^n} Z z_{j_1}\cdots z_{j_r} d\nu_s &= r\frac{(s)_r}{(n)_r}  + (n-r) \frac{(s)_{r+1}}{(n)_{r+1}} \\
             &= r\frac{s(s-1)\cdots (s-r+1)}{n(n-1)\cdots (n-r+1)}  + (s-r) \frac{s(s-1)\cdots (s-r+1)}{n(n-1)\cdots (n-r+1)}\\
             &= s \frac{(s)_r}{(n)_r} =  s\int_{\{0, 1\}^n} z_{j_1}\cdots z_{j_r} d\nu_s,
    \end{aligned}
    \end{equation*}
    The first decomposition holds because exactly $n-r$ indices from $Z$ increase the degree from $r$ to $r+1$. The remaining indices keep the degree.  

    For all integer $s = 0,\dots, n$. Both sides of the second equation give the probability that a subset of size $s$ contains $S$ and excludes $T$. Treating both sides as polynomials in $s$, their degrees are at most $|S|+|T|\le n-1$. Thus, the polynomials are identical.
\end{proof}
Now we prove Lemma~\ref{lem: har}. Lemma~\ref{lem: c1} shows that $Z - s$ is orthogonal to all polynomials in $\bbP_{n-1}$. Therefore, to construct another polynomial $\widetilde{f}$, we only need 
\begin{equation*}
    f^2(\bz) - \widetilde{f}^2(\bz) = (Z - s) g(\bz),\qquad \deg(g)\le n-1.
\end{equation*}
    Let $\cV_r$ be the space spanned by the basis monomial $\prod_{i\in S} z_i$ with $|S|=r$. The Laplace operator $\Delta = \sum_{i=0}^{n} \partial_{z_i}$ is a linear transform $\Delta: \cV_r\to \cV_{r-1}$ for $r\ge 1$ and 
    \begin{equation*}
        \Delta \prod_{i\in S} z_i = \sum_{i\in S} \prod_{j\in S - \{i\}} z_j.
    \end{equation*}
    The adjoint operator $\Delta^{\ast}: \cV_r\to \cV_{r+1}$ can be defined from the intrinsic inner-product structure $L^2(\cV_r; \bbC)$. Let $p, q\in\cV_r$ be 
    \begin{equation*}
        p = \sum_{|S|=r} p_S \prod_{i\in S} z_i,\qquad   q = \sum_{|S|=r} q_S \prod_{i\in S} z_i.
    \end{equation*}
    Then $\aver{p, q}_{\cV_r} := \sum_{|S|=r} \overline{p_S} q_S $, and the $\Delta^{\ast}$ is 
    \begin{equation*}
        \Delta^{\ast} \prod_{i\in S} z_i = \sum_{j\notin S} \prod_{i\in S\cup \{j\} } z_i
    \end{equation*}
    Consider any $p\in (\Delta^{\ast} \cV_r)^{\perp} \subset \cV_{r+1}$, then for any $q\in\cV_r$, we have
    \begin{equation*}
       0 =  \aver{\Delta^{\ast} p , q} = \aver{p, \Delta q}\Longleftrightarrow q\in \ker(\Delta)\cap \cV_{r+1}. 
    \end{equation*}
Therefore, $\cV_{r+1} = \Delta^{\ast} \cV_r \oplus \left( \ker(\Delta)\cap \cV_{r+1}\right) $. For any polynomial $f = \sum_{r=0}^d f_r$ of degree $d$, each $f_r$ is a homogeneous degree-$r$ polynomial. Then we apply the space decomposition repeatedly. Set $\theta_{r, r} = f_r$, and for $k=r, r-1,\dots, 1$, 
\begin{equation*}
    \theta_{r, k} = h_{r, k} + \Delta^{\ast} \theta_{r, k-1},\qquad h_{r, k}\in \ker(\Delta)\cap \cV_{k},   \qquad \theta_{r, k-1}\in \cV_{k-1}.
\end{equation*}
Here we do not require $\theta_{r, k}$ to be unique. At $k=0$, $h_{r, 0} = \theta_{r, 0}$. Sum these terms up, and we obtain
\begin{equation*}
    f_r = \sum_{k=0}^r ( \Delta^{\ast} )^{r - k} h_{r, k},\qquad h_{r, k}\in \ker(\Delta)\cap \cV_{k}.
\end{equation*}
Therefore, there exists a decomposition of expanded Boolean harmonic functions 
\begin{equation*}
    f = \sum_{r=0}^d \sum_{k=0}^r ( \Delta^{\ast} )^{r - k} h_{r, k}.
\end{equation*}
For any $p\in\cV_r$, we have
\begin{equation*}
    Z p = \Delta^{\ast} p + r p\Longrightarrow \Delta^{\ast} p = (Z - r)p,\qquad \forall p\in \cV_r.
\end{equation*}
Apply the above to $( \Delta^{\ast} )^{r - k} h_{r, k} $ repeatedly, 
\begin{equation*}
\begin{aligned}
    ( \Delta^{\ast} )^{r-k} h_{r, k} & = (Z - (k+r-k-1)) ( \Delta^{\ast} )^{r-k-1} h_{r, k} \\& = \cdots = (Z - r + 1)\cdots (Z - k) h_{r, k} \\
    &= (Z - k)_{r - k} h_{r, k}. 
\end{aligned}
\end{equation*}
Therefore, 
\begin{equation*}
    f = \sum_{r=0}^d \sum_{k=0}^r  (Z - k)_{r - k} h_{r, k}. 
\end{equation*}
We construct another polynomial $\widetilde{p}$ by replacing $Z$ with $s$, 
\begin{equation}\label{eq: w f}
    \widetilde{f} = \sum_{r=0}^d \sum_{k=0}^r  (s - k)_{r - k} h_{r, k}. 
\end{equation}
We emphasize that each term in $ \widetilde{f}$ is also a polynomial in $s$ and their total degree satisfies
\begin{equation}\label{eq: tot}
    \deg_s ((s - k)_{r - k} h_{r, k} ) + \deg_{\bz}((s - k)_{r - k} h_{r, k}) = r \le d.
\end{equation}
Then, $f - \widetilde{f} = (Z - s) g$ with $\deg(g)\le d-1$. Indeed, $\deg h_{r,k} = k$, 
\begin{equation*}
    f - \widetilde{f} = (Z - s) \sum_{r=0}^d \sum_{k=0}^r  J_{r, k}(Z) h_{r, k}, \qquad J_{r, k}(u) := \frac{  (u - k)_{r - k} - (s - k)_{r - k} }{u - s}\in\bbP_{r-k-1}.
\end{equation*}
Because $Z - s$ is real-valued polynomial, 
\begin{equation*}
    |f|^2 - |\widetilde{f}|^2 = (f - \widetilde{f}) \overline{\widetilde{f}} + \overline{(f - \widetilde{f})} f = ( Z -s ) ( \overline{g} f + g \overline{\widetilde{f}} ).
\end{equation*}
By Lemma~\ref{lem: c1}, 
\begin{equation*}
    \int_{ \{0, 1\}^n } \left(  |f|^2 - |\widetilde{f}|^2 \right) d\nu_s = 0.
\end{equation*}
Rearrange the terms in~\eqref{eq: w f} according to the degree in $\bz$, we can write 
\begin{equation*}
    \widetilde{f}(\bz) = \sum_{j=0}^d h_j,\qquad h_j = \sum_{|S|=j} c_S \prod_{i\in S} z_i, \qquad \Delta h_j = 0,
\end{equation*}
By~\eqref{eq: tot}, each $c_S$ is a polynomial of $s$ with degree $\le d - |S|$.
\begin{lemma}\label{lem: c3}
    If $h_j = \sum_{|S|=j} c_S \prod_{i\in S} z_i$ and $\Delta h_j = 0$, then given a fixed set $R$ that $|R| < j$, 
    \begin{equation*}
        \sum_{R\subset S} c_S = 0.
    \end{equation*}
\end{lemma}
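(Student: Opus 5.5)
We argue by downward induction on $|R|$, starting from $|R| = j-1$, where the claim is exactly the harmonicity condition. Write
\begin{equation*}
    \Delta h_j = \sum_{|T|=j-1} \Big(\sum_{i\notin T} c_{T\cup\{i\}}\Big) \prod_{l\in T} z_l ,
\end{equation*}
which follows from $\Delta \prod_{i\in S} z_i = \sum_{i\in S}\prod_{l\in S-\{i\}} z_l$. Since the square-free monomials of degree $j-1$ are linearly independent in $\cV_{j-1}$, the condition $\Delta h_j = 0$ gives, for every $|T|=j-1$,
\begin{equation*}
    \sum_{S\supset T,\ |S|=j} c_S = \sum_{i\notin T} c_{T\cup\{i\}} = 0.
\end{equation*}
This is the claim for $|R|=j-1$.

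For a general $R$ with $|R| = r < j$, we use double counting. Every $S\supset R$ with $|S|=j$ contains exactly $\binom{j-r}{j-1-r} = j-r$ sets $T$ with $R\subset T\subset S$ and $|T|=j-1$. Hence
\begin{equation*}
    (j-r)\sum_{S\supset R} c_S = \sum_{\substack{R\subset T\\ |T|=j-1}} \ \sum_{\substack{S\supset T\\ |S|=j}} c_S = 0,
\end{equation*}
because each inner sum vanishes by the first step. Since $j-r\ge 1$, the claim follows. No induction is needed beyond this single level; it is a direct averaging.

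The only point that needs care is the claim that $\Delta h_j=0$ forces each coefficient of $\Delta h_j$ to vanish. This holds because $\Delta h_j$ lies in $\cV_{j-1}$ (homogeneous, square-free), where the monomials $\prod_{l\in T} z_l$ form a basis. This is exactly the basis used to define $\aver{\cdot,\cdot}_{\cV_r}$ in the proof of Lemma~\ref{lem: har}, so there is no obstacle.
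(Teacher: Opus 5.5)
Your proof is correct and is essentially the paper's argument. The paper reads off the coefficient of $\prod_{i\in R} z_i$ in $\Delta^{j-|R|}h_j=0$, namely $(j-|R|)!\sum_{S\supset R}c_S$; you apply $\Delta$ once and double count over the intermediate level $|T|=j-1$, which gives the factor $j-|R|$ instead, and the opening mention of downward induction is unnecessary, as you note yourself.
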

\begin{proof}
    Let $|R| = l < j$, then in $\Delta^{j-l} h_j$, the coefficient of $\prod_{i\in R} z_i$ is $(j-l)! \sum_{R\subset S} c_S$. The factorial means there are $(j-l)!$ ways of deleting indices to reach $R$. Because $\Delta h_j = 0$, $\Delta^{j-l} h_j = 0$ whenever $l < j$. Hence $\sum_{R\subset S} c_S = 0$.
\end{proof}
Suppose $j < k$, 
\begin{equation*}
\begin{aligned}
    \int_{ \{0, 1\}^n } h_j\overline{h_k} d\nu_s &= \sum_{|S|=j}\sum_{|T|=k} c_S \overline{c_T} \int_{ \{0, 1\}^n } \prod_{i\in S\cup T} z_i d\nu_s  \\
    &= \sum_{|S|=j}\sum_{|T|=k} c_S \overline{c_T} \frac{(s)_{j+k - |S\cap T|}}{(n)_{j+k - |S\cap T|}} .
\end{aligned}
\end{equation*}
Here $s, n$ are fixed, let
\begin{equation}\label{eq: st}
    \varphi(|S\cap T|) := \frac{(s)_{j+k - |S\cap T|}}{(n)_{j+k - |S\cap T|}}. 
\end{equation}
Since $|S\cap T|$ only has possible values in $\{0, \dots, j\}$. There are constants $b_0, \dots, b_j$ that 
\begin{equation*}
    \varphi(|S\cap T|) = \sum_{l=0}^j b_l \binom{|S\cap T|}{l},
\end{equation*}
where $b_l$ depends on $s, n, j, k, l$ only. 
\begin{lemma}\label{lem: c4}
    The leading coefficient 
    \begin{equation*}
    b_j = \sum_{v=0}^j (-1)^{j-v} \binom{j}{v} \varphi(v).
\end{equation*}
\end{lemma}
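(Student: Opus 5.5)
We need the leading coefficient $b_j$ in the expansion $\varphi(v)=\sum_{l=0}^j b_l\binom{v}{l}$, which is valid on $v\in\{0,\dots,j\}$. The plan is to recognize this as Newton's forward-difference interpolation and to read off $b_j$ with a finite-difference operator.

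The basis $\{\binom{v}{l}\}_{l=0}^j$ restricted to the nodes $\{0,1,\dots,j\}$ is unitriangular: $\binom{v}{l}=0$ for $v<l$ and $\binom{l}{l}=1$. The coefficients $b_0,\dots,b_j$ are therefore uniquely determined by the $j+1$ values $\varphi(0),\dots,\varphi(j)$. This justifies the existence claimed just before the lemma.

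To extract $b_j$, I introduce the forward difference $(\Delta_f g)(v)=g(v+1)-g(v)$ acting on functions on the integers. Pascal's rule $\binom{v+1}{l}-\binom{v}{l}=\binom{v}{l-1}$ shows that $\Delta_f\binom{v}{l}=\binom{v}{l-1}$. Iterating, $\Delta_f^{\,j}\binom{v}{l}=\binom{v}{l-j}$, which vanishes for $l<j$ and equals $1$ for $l=j$. Applying $\Delta_f^{\,j}$ to the expansion and evaluating at $v=0$ gives $b_j=(\Delta_f^{\,j}\varphi)(0)$. This uses only the values $\varphi(0),\dots,\varphi(j)$, so it does not matter that the identity holds only on those nodes.

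It remains to write $\Delta_f^{\,j}=(E-I)^j$, where $E$ is the shift $g(v)\mapsto g(v+1)$. Since $E$ and $I$ commute, the binomial theorem gives
\begin{equation*}
(\Delta_f^{\,j}\varphi)(0)=\sum_{v=0}^j\binom{j}{v}(-1)^{j-v}\varphi(v),
\end{equation*}
which is the stated formula.

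An alternative, purely linear-algebraic route is binomial inversion. From $\varphi(v)=\sum_{l}\binom{v}{l}b_l$ one gets $b_l=\sum_{v}(-1)^{l-v}\binom{l}{v}\varphi(v)$, using the orthogonality relation $\sum_v(-1)^{j-v}\binom{j}{v}\binom{v}{l}=\delta_{jl}$. That relation itself follows from $\binom{j}{v}\binom{v}{l}=\binom{j}{l}\binom{j-l}{v-l}$ and the vanishing alternating sum $(1-1)^{j-l}=0$. I would include this as the verification step.

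There is no real obstacle. The only care needed is to stress that the expansion is asserted only on $\{0,\dots,j\}$, and that the finite-difference evaluation at $0$ indeed only touches these nodes.
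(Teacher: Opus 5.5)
Your proof is correct and is essentially the paper's argument. The paper substitutes $\varphi(v)=\sum_{l}b_l\binom{v}{l}$ into the alternating sum and invokes $\sum_{v}(-1)^{j-v}\binom{j}{v}\binom{v}{l}=\delta_{lj}$, proved via $\binom{j}{v}\binom{v}{l}=\binom{j}{l}\binom{j-l}{v-l}$ and the binomial theorem; this is exactly your ``alternative route,'' and your forward-difference formula $b_j=(\Delta_f^{\,j}\varphi)(0)$ is the same identity written in operator form. Your remarks that only the nodes $0,\dots,j$ are used, and that unitriangularity gives existence and uniqueness of the $b_l$, add a little rigor the paper leaves implicit.
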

\begin{proof}
Bring~\eqref{eq: st} into above,  
\begin{equation*}
 \sum_{v=0}^j (-1)^{j-v} \binom{j}{v}  \binom{v}{l} =  \sum_{v=l}^j (-1)^{j-v} \binom{j}{v}  \binom{v}{l} = \binom{j}{l} \sum_{v=l}^j (-1)^{j-v} \binom{j-l}{v - l} = \delta_{lj}.
\end{equation*}
The last identity is exactly the binomial theorem.
\end{proof}
Thus, by Lemma~\ref{lem: c3} and $l\le j < k$, 
\begin{equation*}
    \begin{aligned}
 \int_{ \{0, 1\}^n } h_j\overline{h_k} d\nu_s &= \sum_{l=0}^j b_l \sum_{|S|=j}\sum_{|T|=k} c_S \overline{c_T} \binom{|S\cap T|}{l} \\
&= \sum_{l=0}^j b_l \sum_{|R|=l} \left( \sum_{\substack{|S|=j\\ R\subset S} } c_S  \right)  \left( \sum_{\substack{|T|=k\\ R\subset T}} \overline{c_T}  \right) = 0.
    \end{aligned}
\end{equation*}
If $j = k$, in the above summation, only $R = S = T$ survives, 
\begin{equation*}
     \int_{ \{0, 1\}^n } |\widetilde{f}|^2 d\nu_s =  \sum_{j=0}^d \int_{ \{0, 1\}^n } |h_j|^2 d\nu_s =  \sum_{|S|=j} |c_S|^2   b_j.
\end{equation*}
Let $S$, $T$ be two index sets that $|S|=|T|=j$, $S\cap T=\emptyset$, this is possible when $2j \le n$. Then by Lemma~\ref{lem: c4} and binomial expansion, 
\begin{equation*}
\begin{aligned}
    b_j &= \sum_{v=0}^j (-1)^{j-v}\binom{j}{v} \frac{(s)_{2j - v}}{(n)_{2j-v}} \\&= \int_{ \{0, 1\}^n }   \prod_{i\in S, |S|=j} z_i \prod_{k\in T, |T|=j}(1 - z_k) d\nu_s(\bz), \qquad 
\end{aligned} 
\end{equation*}
By Lemma~\ref{lem: c1}, $    b_j =  \frac{(s)_{j} (n-s)_{j}}{(n)_{2j}}$, where $0\le j\le d$. When $s \ge d-1$ and $s \le n-d+1$, both factors 
\begin{equation*}
    (s)_j = s(s-1) \cdots (s-j+1) \ge 0, \qquad (n-s)_{j} = (n-s)(n-s-1)\cdots (n- s - j + 1) \ge 0.
\end{equation*}
This proves both Theorem~\ref{thm: pos} and Lemma~\ref{lem: har}.

\bibliographystyle{siam}
\bibliography{main}
\end{document}